\newtheorem{theorem}{Theorem}[section]
\newtheorem{lemma}[theorem]{Lemma}
\newtheorem{corollary}[theorem]{Corollary}
\theoremstyle{definition}
\newtheorem{remark}[theorem]{Remark}
\numberwithin{equation}{section}
\newcommand{\N}{{\mathbb N}}
\newcommand{\R}{{\mathbb R}}
\newcommand{\eps}{{\varepsilon}}
\newcommand{\U}{{\overline{U}}}
\renewcommand{\u}{{\underline{U}}}
\newcommand{\beq}{\begin{equation}}
\newcommand{\eeq}{\end{equation}}
\newcommand{\beqa}{\begin{eqnarray}}
\newcommand{\eeqa}{\end{eqnarray}}
\newcommand{\beqn}{\begin{eqnarray*}}
\newcommand{\eeqn}{\end{eqnarray*}}
\begin{document}

\title
[Supercritical elliptic equations with Hardy potential]
{Existence, stability and oscillation properties of slow decay
positive solutions of supercritical elliptic equations with Hardy potential}

\author{Vitaly Moroz}
\address{Swansea University\\ Department of Mathematics\\ Singleton Park\\
Swansea\\ SA2~8PP\\ Wales, United Kingdom}	
\email{V.Moroz@swansea.ac.uk}

\author{Jean Van Schaftingen}
\address{Universit\'e catholique de Louvain\\
Institut de Recherche en Math\'ematique et Physique (IRMP)\\
Chemin du Cyclotron 2 bte L7.01.01\\
1348 Louvain-la-Neuve\\
Belgium}
\email{Jean.VanSchaftingen@uclouvain.be}

\keywords{Supercritical elliptic equations; Hardy potential; slow decay solutions; stable solutions; Joseph--Lundgren critical exponent}
\subjclass{35J61 (35B05, 35B33, 35B40)}

\date{\today}

\begin{abstract}
We prove the existence of a family of slow decay positive solutions of
a supercritical elliptic equation with Hardy potential in $\R^N$
and study stability and oscillation properties of these solutions.
We also establish the existence of a continuum of stable slow
decay positive solutions for the relevant exterior Dirichlet problem.
\end{abstract}

\maketitle

\section{Introduction.}
Our starting point is a superlinear elliptic problem in the entire space
\begin{equation}\label{P}
-\Delta u = u^p,\quad u>0\quad\text{in $\R^N$},
\end{equation}
where $p>1$ and $N\ge 3$. By $p_S:=\frac{N+2}{N-2}$ in what follows we denote
the {\em critical Sobolev exponent}.
It is well--known that for $p<p_S$ problem \eqref{P} has no positive solutions.
For finite energy solutions this is an easy consequence of Pohozaev's identity.
For positive solutions without decay assumptions at infinity
this is a deep result of Gidas and Spruck \cite{GS}.
For $p=p_S$ all positive solutions of \eqref{P} are given up to translations by
a one-parameter family
$$W_{\lambda}(|x|):=\lambda^\frac{N-2}{2}W_1\big(\lambda|x|\big)\qquad(\lambda>0),$$
where $W_1(x):=\big(1+(N(N-2))^{-1}|x|^2\big)^{-\frac{N-2}{2}}$ is a rescaled minimizer
of the Sobolev inequality.

For $p>p_S$ the structure of the solution set of \eqref{P} is more complex.
First we note that for all $p>\frac{N}{N-2}$ problem \eqref{P} possesses an explicit {\em singular} radial positive solution
$$U_\infty(x):=C_p|x|^{-\frac{2}{p-1}},\qquad C_p:=\left(\frac{2}{p-1}\Big(N-2-\frac{2}{p-1}\Big)\right)^\frac{1}{p-1}.$$
Observe that if $p>p_S$ then $U_\infty\in H^1_{loc}(\R^N)$ and hence $U_\infty$ is a weak solution of \eqref{P} in the entire $\R^N$, despite the singularity at the origin.
However $U_\infty$ is an infinite energy solution because of its slow decay at infinity for $p>p_S$.

The set of all radially symmetric solutions of \eqref{P} can be analyzed
through phase plane analysis after applying Fowler's transformation, cf.~\cite[p.~50-55]{QS}.
In particular, if $p>p_{S}$ then \eqref{P} admits a radial positive solution $U_1(|x|)$
such that $U_1(0)=1$. It is known that $U_1(|x|)$ is monotone decreasing and
\[
  \lim_{|x|\to\infty}\frac{U_1(|x|)}{|x|^{-\frac{2}{p-1}}}=C_p,
\]
however $U_1$ has no explicit representation in terms of elementary functions.
Taking into account the scaling invariance one concludes that rescalings of $U_1$
are also solutions of \eqref{P}, so that \eqref{P} possess
a one-parameter continuum of radial positive solutions
\beq\label{family}
U_{\lambda}(|x|)=\lambda^{\frac{2}{p-1}}U_1(\lambda|x|)\qquad(\lambda>0).
\eeq
One can show that the singular solution $U_\infty$ is the limit of the family $(U_\lambda)$,
in the sense that for any $x\neq 0$ holds
$$\lim_{\lambda\to\infty}U_{\lambda}(|x|)=U_\infty(|x|).$$
In addition, it is known that given $0<\lambda_1<\lambda_2\le\infty$,
solutions $U_{\lambda_1}(r)$ and $U_{\lambda_2}(r)$ in the range $p_S<p<p_{JL}$
intersect each other infinitely many times as $r\to\infty$ , while
for $p\ge p_{JL}$ the solutions are strictly ordered, that is
$U_{\lambda_1}(r)<U_{\lambda_2}(r)$ for all $r\ge 0$.
Here
$$p_{JL}:=\left\{
\begin{aligned}
&\frac{N - 2\sqrt{N - 1}}{N - 4 - 2\sqrt{N - 1}},& &\text{if $N>10$},\\
&\infty& & \text{if $N\le 10$},\\
\end{aligned}
\right.$$
is the {\em Joseph--Lundgren stability exponent}, introduced in \cite{JL}.
The exponent $p_{JL}$ controls various oscillation and stability properties
of solutions $U_\lambda$, which are particularly important in the study
of the time--dependent parabolic version of \eqref{P},
see \cite{Ni,Wang} or \cite[p.~50-55]{QS} for a discussion.
\medskip

We are interested in a perturbation of \eqref{P} by the Hardy inverse square potential,
that is the equation
\begin{equation}\label{P-H-intro}
-\Delta u +\frac{\mu}{|x|^2}u= u^p,\quad u>0\quad\text{in $\R^N\setminus\{0\}$,}
\end{equation}
where $\mu > -C_H$ and $C_H:=\frac{(N-2)^2}{4}$ is the {\em Hardy critical constant},
i.e. the optimal constant in the Hardy inequality
\beq\label{Hardy-ineq}
\int_{\R^N}|\nabla\varphi|^2\,dx\ge C_H\int_{\R^N}\frac{|\varphi|^2}{|x|^2}\,dx\qquad\forall \varphi\in C^\infty_c(\R^N).
\eeq
Hardy potential provides an important example of a long range potential,
that is a potential which modifies asymptotic decay rate of solutions
at infinity and their behavior at the origin, see e.g. \cite{Bidaut-Veron,Guerch-Veron}.

For $p\neq p_S$ a Pohozaev--type identity shows that similarly to \eqref{P}, equation \eqref{P-H-intro} has no finite energy solutions \cite{Terracini}. For $p=p_S$ equation\eqref{P-H-intro} admits an explicit one-parameter family of finite energy radial solutions, cf. \cite{Bidaut-Veron,Terracini}. However, the structure of positive solutions of \eqref{P-H-intro} in the {\em critical regime} $p=p_S$ is not fully understood. It is known that for large values of $\mu>0$ equation \eqref{P-H-intro} admits nonradial solutions which are distinct modulo rescalings from the radial solutions \cite{Bidaut-Veron,Terracini}.
See \cite{YanYanLi} for recent results and discussion of open questions in this direction.

In the present work we consider equation \eqref{P-H-intro} in the {\em supercritical regime} $p>p_S$.
In the next section we setup the problem and discuss basic properties of the explicit singular solution
similar to $U_\infty$. In Section \ref{RN}, for optimal ranges of $p$ and $\mu$ we establish the existence of a one-parameter family $(U_\lambda)_{\lambda>0}$ of infinite energy solutions of \eqref{P-H-intro}, which coincides with \eqref{family} when $\mu=0$. We also discuss stability properties of these solutions. The presence of the Hardy potential produces a range of new critical exponents related to stability which do not have immediate analogues in the unperturbed case of equation \eqref{P}. Finally in Section \ref{S-ext}, we discuss equation \eqref{P-H-intro} in exterior domains. We justify optimality of critical exponents introduced in previous sections. Further, under some assumptions on $p$ and $\mu$ we prove the existence of a continuum of infinite energy solutions of \eqref{P-H-intro}, which in some sense could be considered as a perturbation of the original family of solutions on $\R^N$ but goes beyond spherically symmetric or scaling invariant setting. This partially extends some of the recent results in \cite{DelPino-CV}.

\section{Equations with Hardy potential.}
We study the equation
\begin{equation}\label{P-H}
-\Delta u +\frac{\nu^2-\nu_\ast^2}{|x|^2}u= u^p,\quad u>0\quad\text{in $\R^N\setminus K$,}
\end{equation}
where $ K=\{0\}$, or $\{0\}\in K$ and $K$ is a connected compact set with the smooth boundary $\partial K$,
$p>1$, $N\ge 3$, $\nu>0$ and $\nu_\ast:=\frac{N-2}{2}$,
so that $\nu_\ast^2$ is the Hardy critical constant in \eqref{Hardy-ineq}.
By a solution of \eqref{P-H} we understand a classical solution $u\in C^2(\R^N\setminus K)$,
with no apriori assumption on the decay of $u(x)$ at infinity.
We say $u$ is a weak solution of \eqref{P-H} in $\R^N$ if $u\in H^1_{loc}(\R^N)$ and
$$
\int\nabla u\cdot\nabla\varphi\,dx+\big(\nu^2-\nu_\ast^2\big)\int\frac{u\varphi}{|x|^2}\,dx
=\int u^p\varphi\,dx\qquad\forall \varphi\in C^\infty_c(\R^N).
$$
Note that for $\nu<\nu_\ast$ solutions of \eqref{P-H} must have a singularity at the origin (see Lemma \ref{Phragmen} below)
however this singularity might be compatible with the concept of a weak solution in $\R^N$.

We say a solution $u$ of \eqref{P-H} in $\R^N\setminus K$ has {\em finite energy} if $u\in D^1(\R^N\setminus K)$,
the completion of $C^\infty_c(\R^N\setminus K)$ with respect to the norm $\|\nabla\varphi\|_{L^2}$.
We say a solution $u$ of \eqref{P-H} is {\em stable} in $\R^N\setminus K$ if
the formal second variation at $u$ of the energy which corresponds to \eqref{P-H} is nonnegative definite,
that is
\beq\label{stable}
\int|\nabla\varphi|^2\,dx+\big(\nu^2-\nu_\ast^2\big)\int\frac{\varphi^2}{|x|^2}\,dx
-p\int u^{p-1}\varphi^2\,dx\ge 0\qquad\forall \varphi\in C^\infty_c(\R^N\setminus K).
\eeq
A solution $u>0$ of \eqref{P-H} is called {\em semi-stable} in $\R^N\setminus K$ if it is stable
in $\R^N\setminus B_R$, for some $R>0$.
A solution $u>0$ of \eqref{P-H} is called {\em unstable} if it is not semistable.
Note that these definitions do not require $u$ to be a finite energy solution.

\subsection{Explicit radial solution.}
For $\nu>0$ and $p>p_*:=1+\frac{2}{\nu_*+\nu}$, set
$$
U_\infty(x):=C_{p,\nu}|x|^{-\frac{2}{p-1}},\qquad C_{p,\nu}^{p-1}:=\nu^2-\left(\nu_*-\frac{2}{p-1}\right)^2,
$$
and introduce the critical exponent
$$p^*:=\left\{
\begin{array}{cl}
1+\frac{2}{\nu_*-\nu},&\text{if $\nu<\nu_*$},\\
\infty&\text{if $\nu\ge\nu_*$},\\
\end{array}
\right.$$
Clearly $p^*>p_S$.
A direct computation shows that $U_\infty$ is a positive solution of \eqref{P-H} for all $p_* < p < p^*$, while for $p \not\in [p_*, p^*]$ the coefficient $C_{p,\nu}$ becomes negative.
Note that $U_\infty\in H^1_{loc}(\R^N)$ for $p>p_S$,
that is $U_\infty$ is a weak solution of \eqref{P-H} in $\R^N$.
However $U_\infty$ is an infinite energy solution because of its slow decay at infinity.

The importance of the solution $U_\infty$ is due to the fact that it will be used as
an elementary building block for constructing further solutions of \eqref{P-H}.
In order to do this it is essential to understand stability properties of $U_\infty$.

\begin{lemma}
Let $p\in(p_*,p^*)$ and $\nu>0$.
The solution $U_\infty$ is stable if and only if
\beq\label{alg}
pC_{p,\nu}^{p-1}\le\nu^2,
\eeq
while if \eqref{alg} fails then $U_\infty$ is unstable.
\end{lemma}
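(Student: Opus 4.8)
The plan is to substitute $u=U_\infty$ into the second-variation form \eqref{stable} and to recognise the result as a Hardy-type inequality. Since $U_\infty^{p-1}=C_{p,\nu}^{p-1}\abs{x}^{-2}$, stability of $U_\infty$ is equivalent to
\[
\int\abs{\nabla\varphi}^2\,dx+\bigl(\nu^2-\nu_*^2-pC_{p,\nu}^{p-1}\bigr)\int\frac{\varphi^2}{\abs{x}^2}\,dx\ge0
\qquad\forall\varphi\in C^\infty_c(\R^N\setminus\{0\}).
\]
Setting $\beta:=\nu_*^2+pC_{p,\nu}^{p-1}-\nu^2$ and rearranging, this is precisely the assertion that $\int\abs{\nabla\varphi}^2\,dx\ge\beta\int\varphi^2\abs{x}^{-2}\,dx$ for all such $\varphi$, i.e.\ that the Hardy inequality \eqref{Hardy-ineq} continues to hold with the constant $\beta$ in place of $\nu_*^2$. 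Observe that the threshold $\beta\le\nu_*^2$ is, after cancelling $\nu_*^2$, exactly the algebraic condition $pC_{p,\nu}^{p-1}\le\nu^2$ of \eqref{alg}.

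For the sufficiency direction I would argue directly: if \eqref{alg} holds then $\beta\le\nu_*^2$, and since $\int\varphi^2\abs{x}^{-2}\,dx\ge0$, the sharp Hardy inequality \eqref{Hardy-ineq} gives $\int\abs{\nabla\varphi}^2\,dx\ge\nu_*^2\int\varphi^2\abs{x}^{-2}\,dx\ge\beta\int\varphi^2\abs{x}^{-2}\,dx$, so the quadratic form is nonnegative and $U_\infty$ is stable. The passage from $C^\infty_c(\R^N)$ to $C^\infty_c(\R^N\setminus\{0\})$ only shrinks the admissible class, so \eqref{Hardy-ineq} certainly remains valid there.

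The substantive part is necessity, for which I must show that \eqref{alg} failing, i.e.\ $\beta>\nu_*^2$, forces the form to be negative for some admissible test function; equivalently, that the best constant of the Hardy inequality restricted to $C^\infty_c(\R^N\setminus\{0\})$ is still exactly $\nu_*^2$ and no larger. This is the main obstacle, since deleting the origin could a priori enlarge the constant. I would resolve it either by a capacity/density argument---for $N\ge3$ a single point has zero $H^1$-capacity, so $C^\infty_c(\R^N\setminus\{0\})$ is dense in $D^1(\R^N)$ and the two infima coincide---or, more explicitly, via the Emden--Fowler substitution $t=\log\abs{x}$ reducing the radial quotient to a one-dimensional weighted inequality on $\R$ whose optimal constant $\nu_*^2$ is approached by functions concentrating as $t\to+\infty$, that is near infinity and hence safely away from the origin. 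Either way one obtains $\varphi$ with $\int\abs{\nabla\varphi}^2\,dx<\beta\int\varphi^2\abs{x}^{-2}\,dx$, making the form in the first display negative, so $U_\infty$ is unstable. I expect the substitution and the threshold computation to be routine; the only point requiring genuine care is the invariance of the optimal Hardy constant under deleting the singularity.
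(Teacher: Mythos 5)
Your reduction of the lemma to the sharpness of the Hardy constant is exactly the paper's argument: the second variation at $U_\infty$ is $\int|\nabla\varphi|^2\,dx+(\nu^2-\nu_*^2-pC_{p,\nu}^{p-1})\int\varphi^2|x|^{-2}\,dx$, and stability is equivalent to a Hardy inequality with constant $\nu_*^2+pC_{p,\nu}^{p-1}-\nu^2$ over $C^\infty_c(\R^N\setminus\{0\})$. Your sufficiency step is correct, and both of your proposed proofs that deleting the origin does not increase the optimal constant (zero capacity of a point, or the Emden--Fowler reduction with test functions living in far-away annuli) are sound; indeed you are more careful on this point than the paper, which simply cites optimality of $\nu_*^2$.

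The genuine gap is in the last word of your argument: \emph{unstable}. In this paper ``unstable'' is not the negation of ``stable'' --- it means \emph{not semistable}, i.e.\ for \emph{every} $R>0$ the quadratic form admits a negative direction $\varphi\in C^\infty_c(\R^N\setminus \bar B_R)$. Producing a single $\varphi\in C^\infty_c(\R^N\setminus\{0\})$ with negative energy, which is all that your capacity/density route delivers (its approximating functions may well have support touching down near the origin), proves only that $U_\infty$ is not stable; it does not rule out stability in some exterior domain. The paper closes this gap by invoking the scaling invariance of the Hardy quotient: if $\varphi$ makes the form negative, so does $\varphi_\lambda:=\varphi(\cdot/\lambda)$, since both $\int|\nabla\varphi_\lambda|^2\,dx$ and $\int\varphi_\lambda^2|x|^{-2}\,dx$ scale by the same factor $\lambda^{N-2}$, and the support of $\varphi_\lambda$ leaves any fixed ball as $\lambda\to\infty$. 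Your Emden--Fowler alternative in fact achieves the same thing --- the near-optimal test functions concentrate at $t\to+\infty$ and can therefore be taken supported outside any $B_R$ --- but you present this localization only as a way of staying ``safely away from the origin,'' not as the step required by the definition of instability. So either keep route (b) and state explicitly that the negative directions can be placed outside every ball, or supplement route (a) with the scaling remark; as written, the final inference ``negative for some admissible test function, so $U_\infty$ is unstable'' does not match the paper's definitions.
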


\proof
The formal second variation of the energy which corresponds to \eqref{P-H} at $U_\infty$ is
given by
$$
\int|\nabla\varphi|^2\,dx+\big(\nu^2-\nu_\ast^2-pC_{p,\nu}^{p-1}\big)\int\frac{|\varphi|^2}{|x|^2}\,dx.
$$
Thus the assertion follows directly from the fact that $\nu_*^2$ is the optimal constant in
the Hardy inequality \eqref{Hardy-ineq}.

Taking into account the scaling invariance of Hardy's inequality
we also  conclude that if \eqref{alg} fails then $U_\infty$ must be unstable.
\qed

The inequality \eqref{alg} amounts to a third degree algebraic expression
for which closed form solutions could be obtained using Cardano's formulae,
however the explicit expressions for solutions are tedious.
Below we present a qualitative analysis of \eqref{alg}.
Set $s:=-\frac{2}{p-1}$, so that \eqref{alg} transforms into
$$\frac{(s+\nu_*)^2(s-2)+2\nu^2}{|s|}\le 0\qquad(-\nu_*-\nu<s<\min\{-\nu_*+\nu,0\}).$$
Define
$$\theta(s):=(s+\nu_*)^2(s-2).$$
Then solving \eqref{alg} for $p_*<p<p^*$ is equivalent to classifying the roots of the equation
\beq\label{roots}
\theta(s)= -2\nu^2\qquad(-\nu_*-\nu<s<\min\{-\nu_*+\nu,0\}),
\eeq
and solving the inequality
\beq\label{theta}
\theta(s)\le -2\nu^2\qquad(-\nu_*-\nu<s<\min\{-\nu_*+\nu,0\}).
\eeq
Note that $\theta(0)=-2\nu_*^2$ and that $\theta$ has two critical points:
a local maximum at $s_{max}:=-\nu_*$ with $\theta(s_{max})=0$ and
a local minimum at $s_{min}:=-\frac{\nu_*-4}{3}$ with
$\theta(s_{min})=-\frac{4}{27}(2+\nu_*)^3$. Denote
$$\bar\nu:=\sqrt{\frac{2}{27}(2+\nu_*)^3}=\sqrt{2\left(\frac{N+2}{6}\right)^3}.$$
Clearly for every $\nu>0$ equation \eqref{roots} has exactly one root $\sigma_\#$ in the interval $(-\nu_*-\nu,-\nu_*)$.
To analyze the roots of \eqref{theta} in the interval $(-\nu_*,\min\{-\nu_*+\nu,0\})$ we distinguish the cases $s_{min}<0$
and $s_{min}\ge 0$.
\footnote{Note that if we write $\bar\mu=\bar\nu^2-\nu_*^2$ as in \eqref{P-H-intro} then $\bar\mu=\frac{1}{108}(N-10)^2(N-1)$.}
\smallskip

In the case $s_{min}\ge 0$ (that is $3\le N\le 10$):

$\quad(i)$ if $\nu\ge\nu_*$ then \eqref{roots} has no roots in $(-\nu_*,0)$ and \eqref{theta} holds
$\forall s\in(-\nu_*-\nu,\sigma_\#]$,

$\quad(ii)$ if $0<\nu<\nu_*$ then \eqref{roots} has exactly one root $\sigma_-\in(-\nu_*,-\nu_*+\nu)$
and \eqref{theta} holds $\forall s\in(-\nu_*-\nu,\sigma_\#]\cup[\sigma_-,-\nu_*+\nu)$.
\smallskip

In the case $s_{min}< 0$ (that is $N> 10$):

$\quad(i)$ if $\nu>\bar\nu$ then \eqref{roots} has no roots in $(-\nu_*,0)$ so that \eqref{theta} holds
$\forall s\in(-\nu_*-\nu,\sigma_\#]$,

$\quad(ii)$ if $\nu_*<\nu\le\bar\nu$ then \eqref{roots} has exactly 2 roots $\sigma_-$ and $\sigma_+$
in $(-\nu_*,0)$ and $-\nu_*<\sigma_-\le s_{min}\le\sigma_+<0$ so that \eqref{theta} holds
$\forall s\in(-\nu_*-\nu,\sigma_\#]\cup[\sigma_-,\sigma_+]$,

$\quad(iii)$ if $0<\nu\le\nu_*$ then \eqref{roots} has exactly 1 root $\sigma_-$ in $(-\nu_*,0)$ and
$\sigma_-\in(-\nu_*,s_{min})$ so that \eqref{theta} holds $\forall s\in(-\nu_*-\nu,\sigma_\#]\cup[\sigma_-,0)$.
\smallskip

In what follows we denote
$$p_\#=1-\frac{2}{\sigma_\#},\qquad p_-:=1-\frac{2}{\sigma_-},\qquad p_+:=1-\frac{2}{\sigma_+},$$
and note that
$$1<p_*<p_\#<p_S<p_-\le p_+<p^*,$$
for all values of $N\ge 3$ and $\nu>0$ when all the exponents are well defined.
Then the above analysis leads to the following equivalent to \eqref{alg}
characterization of the stability properties of the solution $U_\infty$ in terms of the original parameters $p$ and $\nu$.

\begin{lemma}
Let $p\in(p_*,p^*)$ and $\nu>0$.
\begin{itemize}
\item[(a)]
If $\nu_*<\nu\le\bar\nu$ and $N\ge 11$ then the solution $U_\infty$
is stable for $p\in(p_*,p_\#]\cup[p_-,p_+]$ and unstable for $p\in(p_\#, p_-)\cup(p_+,p^*)$.

\item[(b)]
If $0<\nu<\nu_*$ and $N\ge 3$ or $\nu=\nu_*$ and $N\ge 11$ then
the solution $U_\infty$ is stable for $p\in(p_*,p_\#]\cup[p_-,p^*)$ and unstable for $p\in(p_\#, p_-)$.

\item[(c)]
If $\nu\ge\nu_*$ and $3\le N\le 10$ or $\nu\ge\bar\nu$ and $N\ge 11$
the solution $U_\infty$ is stable for $p\in(p_*,p_\#]$ and unstable for $p\in(p_\#,\infty)$.
\end{itemize}
\end{lemma}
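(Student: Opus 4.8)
The plan is to read this Lemma as a transcription of the classification of \eqref{theta} already carried out in the variable $s$, pushed back into the exponent $p$. The tool is the substitution $s=-\frac{2}{p-1}$, with inverse $p=1-\frac{2}{s}$; since $\frac{dp}{ds}=\frac{2}{s^2}>0$, this is a strictly increasing bijection from $(1,\infty)$ onto $(-\infty,0)$. Under it, the admissible range $(p_*,p^*)$ corresponds exactly to $\bigl(-\nu_*-\nu,\min\{-\nu_*+\nu,0\}\bigr)$: the left endpoint $p_*$ maps to $s=-(\nu_*+\nu)$, the finite right endpoint $p^*=1+\frac{2}{\nu_*-\nu}$ (the case $\nu<\nu_*$) maps to $s=-\nu_*+\nu$, and $p\to\infty=p^*$ (the case $\nu\ge\nu_*$) corresponds to $s\to 0^-$. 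By definition $p_\#,p_-,p_+$ are the images of $\sigma_\#,\sigma_-,\sigma_+$, so monotonicity turns the ordering $-\nu_*-\nu<\sigma_\#<\sigma_-\le\sigma_+<0$ into $p_*<p_\#<p_-\le p_+<p^*$; noting in addition that $s=-\nu_*$ maps to $p=p_S$ places $p_S$ strictly between $p_\#$ and $p_-$.

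By the equivalence established above, $U_\infty$ is stable exactly when \eqref{alg} holds, hence exactly when \eqref{theta} holds at the corresponding $s$. Since the solution set of \eqref{theta} inside the admissible $s$-interval has already been determined, I would simply push each such set forward under the increasing map and regroup the outcomes by the regimes of $(a)$--$(c)$. The regime of $(a)$ ($\nu_*<\nu\le\bar\nu$, $N\ge 11$) is the subcase $s_{min}<0$ with $\nu_*<\nu\le\bar\nu$, whose stable set $(-\nu_*-\nu,\sigma_\#]\cup[\sigma_-,\sigma_+]$ maps to $(p_*,p_\#]\cup[p_-,p_+]$. The regimes of $(b)$ consist of $s_{min}\ge 0$ with $0<\nu<\nu_*$ together with $s_{min}<0$ with $0<\nu\le\nu_*$; in both the stable set within the admissible interval is $(-\nu_*-\nu,\sigma_\#]\cup[\sigma_-,\min\{-\nu_*+\nu,0\})$, mapping to $(p_*,p_\#]\cup[p_-,p^*)$. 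The regimes of $(c)$ are precisely those where \eqref{roots} has no root in $(-\nu_*,0)$, leaving the single stable piece $(-\nu_*-\nu,\sigma_\#]$, i.e. $(p_*,p_\#]$. In each case the unstable exponents form the complement within $(p_*,p^*)$, which yields the stated instability intervals.

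The routine bookkeeping concerns endpoints, which I would settle once and for all: because \eqref{theta} is non-strict, its roots $\sigma_\#,\sigma_-,\sigma_+$ lie in the stable set, so $p_\#,p_-,p_+$ are included (closed), while $p_*$ and $p^*$ are excluded because the admissible interval is open. The only delicate issues are the transitional values of $\nu$, which I expect to be the main obstacle. At $\nu=\bar\nu$ one has $\theta(s_{min})=-\frac{4}{27}(2+\nu_*)^3=-2\bar\nu^2$, so the two roots coalesce, $\sigma_-=\sigma_+=s_{min}$, and $[p_-,p_+]$ degenerates to the single stable exponent $p_-=p_+$; this degenerate endpoint, absorbed by $(a)$, is the only subtlety in reconciling $(a)$ and $(c)$ at $\nu=\bar\nu$. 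At $\nu=\nu_*$ the right endpoint of the admissible range jumps from the finite $p^*$ to $+\infty$, and whether a second stable tail $[\sigma_-,\cdot)$ is present depends on the sign of $s_{min}$: it is absent for $3\le N\le 10$ (so $\nu=\nu_*$ joins $(c)$) and present for $N\ge 11$ (so $\nu=\nu_*$ joins $(b)$). Everything outside these boundary alignments follows mechanically from the monotonicity of $p\mapsto s$ applied to the already-established classification.
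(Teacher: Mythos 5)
Your proposal is correct and follows essentially the same route as the paper: the paper states this lemma as a direct transcription of its preceding root analysis of \eqref{theta} under the substitution $s=-\frac{2}{p-1}$, which is exactly the monotone-bijection bookkeeping you carry out. Your explicit treatment of the boundary cases (closedness at $p_\#,p_-,p_+$, the degenerate double root at $\nu=\bar\nu$ where $p_-=p_+$, and the placement of $\nu=\nu_*$ according to the sign of $s_{min}$) matches what the paper records in its remarks following the lemma.
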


\begin{remark}
In the pure Laplacian case $\nu=\nu_*$ one calculates the explicit values
$$p_\#=\frac{N + 2\sqrt{N - 1}}{N - 4 + 2\sqrt{N - 1}},\qquad p_-=\frac{N - 2\sqrt{N - 1}}{N - 4 - 2\sqrt{N - 1}},$$
here $p_-$ is defined only for $N\ge 11$.
Thus for the Laplacian the exponent $p_-$ coincides with the Joseph--Lundgren stability exponent,
see \cite{JL} or \cite[p.50]{QS}; while the exponent $p_\#$ is known to appear
in the context of local singularities of solution of equations \eqref{P}, cf. \cite[Lemma 5]{Pacard}.
\end{remark}

\begin{remark}
If $N>10$ and $\nu=\bar\nu$ then $p_-=p_+=\frac{N+2}{N-10}$ is the only supercritical value
of $p$ where $U_\infty$ is stable.
\end{remark}

\subsection{Slow and fast decay solutions.}
Clearly, a solution $u$ of \eqref{P-H} is a positive superharmonic of the linear Hardy operator,
that is $u$ satisfy the linear inequation
\begin{equation}\label{L-H}
-\Delta u +\frac{\nu^2-\nu_*^2}{|x|^2}u\ge 0\quad\text{in $\R^N\setminus K$.}
\end{equation}
As a consequence, solutions of \eqref{P-H} with $\nu^2<\nu_*^2$ are always singular at the origin while for
$\nu^2>\nu_*^2$ solutions might vanish at the origin. More precisely, the following local decay properties for
positive superharmonics of Hardy's operator hold, cf. \cite{LLM}.
\begin{lemma}\label{Phragmen}
If $u>0$ satisfy \eqref{L-H} in a neighborhood of the origin then
\beq\label{Phragmen-0}
\liminf_{|x|\to 0}\frac{u(x)}{|x|^{-\nu_\ast+\nu}}>0,\qquad \liminf_{|x|\to 0}\frac{u(x)}{|x|^{-\nu_\ast-\nu}}<\infty.
\eeq
If $u>0$ satisfy \eqref{L-H} in an exterior domain then
\beq\label{Phragmen-infty}
\liminf_{|x|\to\infty}\frac{u(x)}{|x|^{-\nu_\ast-\nu}}>0,\qquad \liminf_{|x|\to 0}\frac{u(x)}{|x|^{-\nu_\ast+\nu}}<\infty.
\eeq
\end{lemma}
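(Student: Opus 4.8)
The plan is to compare $u$ with the two explicit homogeneous solutions of the linearised Hardy equation, after a substitution that removes the singular zeroth‑order term. First I would record that the radial functions $\phi_\pm(x):=|x|^{-\nu_\ast\pm\nu}$ solve $-\Delta\phi+\frac{\nu^2-\nu_\ast^2}{|x|^2}\phi=0$; indeed inserting $|x|^{-\alpha}$ gives the indicial relation $(\alpha-\nu_\ast)^2=\nu^2$. Since $\phi_+>0$, the substitution $u=\phi_+z$ turns the operator on the left of \eqref{L-H} into a divergence‑form operator with no zeroth‑order term, namely $-\Delta u+\frac{\nu^2-\nu_\ast^2}{|x|^2}u=-\phi_+^{-1}\operatorname{div}\big(\phi_+^2\nabla z\big)$, so that \eqref{L-H} is equivalent to $\operatorname{div}\big(|x|^{2\nu-(N-2)}\nabla z\big)\le 0$. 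Thus $z=u/\phi_+$ is a positive supersolution of a weighted Laplacian whose radial homogeneous solutions are the constants and $|x|^{-2\nu}$ (corresponding to $\phi_+$ and $\phi_-$). This single reduction both legitimises the comparison principle when the Hardy potential is negative ($\nu<\nu_\ast$) and recasts all four assertions as the classical lower and upper control of a positive weighted‑superharmonic function by its two homogeneous profiles.

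For the two lower bounds (the first inequality in each of \eqref{Phragmen-0} and \eqref{Phragmen-infty}) I would argue by comparison on annuli. Near the origin, fix a small $\rho$ with $\overline{B_\rho}\setminus\{0\}$ in the domain, put $m:=\min_{|x|=\rho}u>0$, and on the annulus $\{r<|x|<\rho\}$ let $v_r$ be the radial solution of the linear equation with $v_r=m$ on $|x|=\rho$ and $v_r=0$ on $|x|=r$. Writing $v_r=a_r\phi_++b_r\phi_-$ one checks $v_r\ge 0$, and the minimum principle for the weighted operator, applied to $(u-v_r)/\phi_+$, yields $u\ge v_r$ on the annulus. Letting $r\to 0$ the explicit coefficients satisfy $a_r\to m\rho^{\nu_\ast-\nu}$ and $b_r\to 0$, whence $u\ge m\rho^{\nu_\ast-\nu}\phi_+$ and the first limit in \eqref{Phragmen-0} follows. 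The exterior lower bound is identical on annuli $\{\rho<|x|<R\}$ with $R\to\infty$, the barrier now converging to a positive multiple of $\phi_-$.

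For the two upper bounds on the $\liminf$ I would use monotonicity of a weighted spherical average. Setting $g(r):=\frac{1}{|\partial B_r|}\int_{\partial B_r}z\,dS$ and integrating $\operatorname{div}\big(|x|^{2\nu-(N-2)}\nabla z\big)\le 0$ over annuli shows that $r^{1+2\nu}g'(r)$ is nonincreasing. Positivity of $g$ excludes the sign of the limit of $r^{1+2\nu}g'(r)$ that would force $g$ to become negative, and a one‑line integration then gives $g(r)\le C|x|^{-2\nu}$ near the origin and, at infinity, that $g$ stays bounded along a sequence $r_j\to\infty$. Since $\min_{|x|=r}z\le g(r)$, this bounds $\liminf u/\phi_-$ at the origin and $\liminf u/\phi_+$ at infinity, giving the remaining inequalities.

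The main obstacle is the first step: producing a usable maximum and comparison principle in the presence of the singular, sign‑indefinite coefficient $\frac{\nu^2-\nu_\ast^2}{|x|^2}$. The factorisation through $\phi_+$ is what overcomes it, trading the Hardy potential for a degenerate but sign‑definite weight; after that the argument is the standard Phragm\'en--Lindel\"of and B\^ocher dichotomy for the slowest and fastest homogeneous profiles. A minor technical point is that every integration is carried out on annuli bounded away from the origin, so the singularity of the weight is never met directly.
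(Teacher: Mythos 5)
Your proposal is correct, but there is nothing in the paper to compare it against: the paper states Lemma \ref{Phragmen} without proof, quoting it from the reference \cite{LLM}, so the only meaningful review is of your argument on its own merits. It holds up, and it follows the standard route that the cited reference is built on. The pivotal identity $-\Delta(\phi_+z)+\tfrac{\nu^2-\nu_*^2}{|x|^2}\,\phi_+z=-\phi_+^{-1}\operatorname{div}\bigl(\phi_+^2\nabla z\bigr)$ with $\phi_+(x)=|x|^{-\nu_*+\nu}$ is correct, and it is exactly what rescues the comparison principle when the potential is negative ($\nu<\nu_*$): the weighted operator $\operatorname{div}\bigl(|x|^{2\nu-(N-2)}\nabla\cdot\bigr)$ has no zeroth-order term, so supersolutions obey the minimum principle on every annulus avoiding the origin. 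Your barriers are right: with $v_r=a_r\phi_++b_r\phi_-$, $v_r=m$ on $|x|=\rho$ and $v_r=0$ on $|x|=r$, one gets $a_r=m\rho^{\nu_*+\nu}/(\rho^{2\nu}-r^{2\nu})\to m\rho^{\nu_*-\nu}$ and $b_r=-a_r r^{2\nu}\to0$, whence $u\ge m\rho^{\nu_*-\nu}\phi_+$ near the origin; the exterior case is the mirror image and produces the lower bound by a multiple of $\phi_-$. For the upper bounds, the flux $\int_{\partial B_r}|x|^{2\nu-(N-2)}\partial_r z\,dS=|S^{N-1}|\,r^{1+2\nu}g'(r)$ is indeed nonincreasing, and setting $h(s):=s^{1+2\nu}g'(s)$, the one-line integration $g(r)\le g(\rho_0)-\tfrac{h(\rho_0)}{2\nu}\bigl(r^{-2\nu}-\rho_0^{-2\nu}\bigr)$ for $r<\rho_0$ gives $g(r)\le Cr^{-2\nu}$ near the origin, while the analogous inequality outward gives boundedness of $g$ at infinity; combined with $\min_{|x|=r}z\le g(r)$ this yields both liminf upper bounds. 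In fact your appeal to positivity of $g$ to ``exclude a sign'' is superfluous: monotonicity of $h$ alone delivers the bounds in every sign case, so that slightly vague sentence can simply be replaced by the displayed integration. Two small points to make explicit: the divergence-theorem step and the comparison on annuli use that $u$ is a classical $C^2$ supersolution, which is consistent with the paper's definition of solution but should be said; and the second limit in \eqref{Phragmen-infty} as printed ($|x|\to0$) is a typo for $|x|\to\infty$, which you silently and correctly read as the statement at infinity.
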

Bidaut--V\'eron and V\'eron \cite[Theorem 3.3]{Bidaut-Veron} proved that the structure of the solution set of \eqref{P-H}
in exterior domains which decay at infinity no slower then $U_\infty$ is essentially determined
by the solutions of the following equation
\beq\label{P-SN}
-\Delta_{S^{N-1}}\omega+C_{p,\nu}^{p-1}\omega=\omega^p,\quad\omega>0\quad\text{in $S^{N-1}$.}
\eeq
on the sphere $S^{N-1}$.

\begin{lemma}{\cite[Theorem 3.3]{Bidaut-Veron}}\label{lemma-Veron}
Let $p\neq p_S$.
If $u>0$ satisfy \eqref{P-H} in $\R^N\setminus K$ and
\beq\label{apriori}
\limsup_{|x|\to\infty}\frac{u(x)}{|x|^{-\frac{2}{p-1}}}<\infty,
\eeq
then either
\beq\label{fast}
\lim_{|x|\to\infty}\frac{u(x)}{|x|^{-\nu_\ast-\nu}}=c\qquad\qquad\text{(fast decay)},
\eeq
or there exists a positive solution $\omega(\cdot)$ of \eqref{P-SN} such that
\beq\label{slow}
\lim_{|x|\to\infty}\frac{u(|x|,\cdot)}{|x|^{-\frac{2}{p-1}}}=\omega(\cdot)\qquad\quad\text{(slow decay)}
\eeq
in the $C^k(S^{N-1})$ topology, for any $k\in\N$.
\end{lemma}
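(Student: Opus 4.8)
The plan is to reduce the problem to the large-time dynamics of an autonomous semilinear equation on the cylinder $\R\times S^{N-1}$ via the Emden--Fowler (logarithmic) change of variables, and then to exploit a gradient-like structure. First I would write $x=r\sigma$ with $r=|x|$, $\sigma\in S^{N-1}$, set $t=\ln r$, and substitute the self-similar ansatz $u(x)=r^{-\frac{2}{p-1}}w(t,\sigma)$, which matches the slow-decay rate. A direct computation turns \eqref{P-H} into the autonomous equation
\beq
w_{tt}+\beta\,w_t+\Delta_{S^{N-1}}w-C_{p,\nu}^{p-1}w+w^p=0\qquad\text{on }\R\times S^{N-1},
\eeq
where $\beta=N-2-\frac{4}{p-1}$. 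Its $t$-independent solutions are exactly $w\equiv 0$ together with the positive solutions $\omega(\sigma)$ of \eqref{P-SN}, so the two alternatives in the statement correspond, after undoing the substitution, to $w(t,\cdot)\to 0$ (fast decay) versus $w(t,\cdot)\to\omega$ (slow decay) as $t\to+\infty$. The crucial observation is that $\beta$ vanishes precisely when $p=p_S$; the hypothesis $p\neq p_S$ is exactly what supplies a nonzero damping coefficient and hence the gradient structure used below.

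Next I would establish compactness of the trajectory. The principal part $w_{tt}+\Delta_{S^{N-1}}w$ is the Laplace--Beltrami operator of the product cylinder, so the equation is uniformly elliptic in $(t,\sigma)$. The hypothesis \eqref{apriori} says precisely that $w$ is bounded above on $[T_0,\infty)\times S^{N-1}$; since $w>0$, interior $L^p$ and Schauder estimates applied on unit-length slabs, together with a bootstrap, yield uniform bounds for $w$ in $C^k([T,\infty)\times S^{N-1})$ for every $k$. Consequently the family of time translates $\{w(\cdot+\tau,\cdot)\}_{\tau\geq 0}$ is precompact in $C^k_{loc}$, and the $\omega$-limit set $\Omega$ (as $t\to+\infty$) is nonempty, compact and connected.

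The mechanism forcing convergence to equilibria is an energy identity. Writing $\Phi(w)=\int_{S^{N-1}}\big(\tfrac12|\nabla_\sigma w|^2+\tfrac12 C_{p,\nu}^{p-1}w^2-\tfrac{1}{p+1}w^{p+1}\big)\,d\sigma$ and $\mathcal{E}(t)=\tfrac12\int_{S^{N-1}}w_t^2\,d\sigma+\Phi(w(t,\cdot))$, one computes $\mathcal{E}'(t)=-\beta\int_{S^{N-1}}w_t^2\,d\sigma$. Since the $C^k$ bounds make $\mathcal{E}$ bounded on $[T_0,\infty)$ and $\beta\neq 0$, $\mathcal{E}$ is monotone and convergent, whence $\int_{T_0}^\infty\!\!\int_{S^{N-1}}w_t^2\,d\sigma\,dt<\infty$; together with the uniform bounds this gives $w_t\to 0$ in $L^2(S^{N-1})$, and then uniformly. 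Passing to the limit in the equation along any sequence $t_n\to\infty$, every element of $\Omega$ is a nonnegative solution of the stationary cylinder equation, i.e. $\omega\equiv 0$ or, by the strong maximum principle, a strictly positive solution of \eqref{P-SN}. Because $C_{p,\nu}^{p-1}>0$ in the relevant range, the linearization $-\Delta_{S^{N-1}}+C_{p,\nu}^{p-1}$ at $0$ is invertible, so $0$ is an isolated equilibrium; connectedness of $\Omega$ then forces either $\Omega=\{0\}$ (fast decay) or $\Omega$ consisting only of positive solutions of \eqref{P-SN} (slow decay).

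The hard part is to upgrade convergence along sequences to genuine convergence of $w(t,\cdot)$ to a single limit, since a priori $\Omega$ could be a continuum of equilibria. For this I would invoke a \L ojasiewicz--Simon gradient inequality for the energy $\Phi$, which is real-analytic because $w\mapsto w^p$ is analytic on positive functions: it bounds $|\mathcal{E}(t)-\mathcal{E}_\infty|^{1-\theta}$ by $\Norm{\Phi'(w)}$, which combined with the energy dissipation shows $\int^\infty\Norm{w_t}\,dt<\infty$, forcing $w(t,\cdot)$ to converge to one equilibrium and giving the $C^k(S^{N-1})$ convergence claimed in \eqref{slow}. In the alternative $\Omega=\{0\}$ I would finish by linearizing about $w=0$: expanding in spherical harmonics $Y_k$ with eigenvalues $\lambda_k=k(k+N-2)$ reduces the decay to the scalar ODEs $a_k''+\beta a_k'-(C_{p,\nu}^{p-1}+\lambda_k)a_k=0$, whose slowest stable mode is $k=0$ with rate $\gamma_0^-=-(\nu_*+\nu)+\tfrac{2}{p-1}$; since the a priori bound excludes the unstable modes, $w\sim c\,e^{\gamma_0^- t}$, which in the original variable is exactly the fast-decay rate $r^{-\nu_*-\nu}$. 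That the constant $c$ is strictly positive is not automatic from the dynamics but follows from the Phragm\'en--Lindel\"of lower bound \eqref{Phragmen-infty} of Lemma \ref{Phragmen}. The main obstacles are thus the single-point convergence (the \L ojasiewicz--Simon step) and the sharp identification of the fast-decay rate together with its nonzero coefficient.
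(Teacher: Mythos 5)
The paper itself offers no proof of this lemma: it is quoted directly from \cite{Bidaut-Veron}, and your proposal is in essence a reconstruction of the argument in that cited source --- the Emden--Fowler reduction to an autonomous equation on the cylinder $\R\times S^{N-1}$, compactness of the time-translates via elliptic estimates, an energy identity giving $\int\Norm{w_t}_{L^2}^2\,dt<\infty$, identification of the $\omega$-limit set with nonnegative equilibria, the dichotomy via isolation of $0$, and single-limit convergence via the \L ojasiewicz--Simon inequality (the source invokes L.~Simon's asymptotic convergence theorem, which is exactly this mechanism). The treatment of the fast-decay alternative by spherical-harmonic linearization at $0$, with strict positivity of $c$ supplied by the Phragm\'en--Lindel\"of bound of Lemma \ref{Phragmen}, is also the right ingredient, and your computation of the $k=0$ decay rate is correct.

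However, your Lyapunov functional carries a sign error, and the key identity fails as written. For $w_{tt}+\beta w_t+\Delta_{S^{N-1}}w-\gamma w+w^p=0$, with your notation $\beta=N-2-\tfrac{4}{p-1}$ and $\gamma=C_{p,\nu}^{p-1}$, multiplying by $w_t$ and integrating over $S^{N-1}$ (note $\int w_t\Delta_{S^{N-1}}w\,d\sigma=-\tfrac{d}{dt}\tfrac12\int|\nabla_{S^{N-1}}w|^2\,d\sigma$) gives
\begin{equation*}
\frac{d}{dt}\Bigl[\frac12\int_{S^{N-1}}w_t^2\,d\sigma-\Phi\big(w(t,\cdot)\big)\Bigr]
=-\beta\int_{S^{N-1}}w_t^2\,d\sigma,
\end{equation*}
so the correct energy is $\mathcal E=\tfrac12\Norm{w_t}_{L^2}^2-\Phi(w)$, not $+\Phi(w)$; this is the same sign pattern as the paper's one-dimensional Lyapunov function ${\mathcal E}(w,w')=\frac12|w'|^2-\frac{\gamma}{2}w^2+\frac{1}{p+1}w^{p+1}$ in the proof of Theorem \ref{t-U-lambda}. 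The slip is harmless for your scheme --- boundedness plus monotonicity of the corrected $\mathcal E$ still yield the dissipation integral --- but as stated the identity $\mathcal E'=-\beta\Norm{w_t}_{L^2}^2$ is false. Two further points to tighten: (i) your isolation-of-zero argument assumes $C_{p,\nu}^{p-1}>0$, which holds only for $p_*<p<p^*$; for $p\ge p^*$ one has $C_{p,\nu}^{p-1}\le 0$, and there the dichotomy is trivial because integrating \eqref{P-SN} over $S^{N-1}$ shows it has no positive solutions, so only fast decay can occur (this is precisely how the paper uses the lemma in Section \ref{S-ext}); (ii) for $p<p_S$ the damping $\beta$ is negative, so $\mathcal E$ increases along the flow; monotonicity-plus-boundedness still works, but the \L ojasiewicz--Simon convergence step should then be run after reversing time.
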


\begin{remark}
Clearly, $C_{p,\nu}$ is a constant solution of \eqref{P-SN}. For $1<p<\frac{N+1}{N-3}$
it is known (see \cite{GS} or \cite[Corollary 6.1]{Bidaut-Veron}) that $C_{p,\nu}$ is the only solution of \eqref{P-SN} provided that
\beq\label{alg-SN}
(p-1)C_{p,\nu}^{p-1}\le N-1,
\eeq
while if \eqref{alg-SN} fails then problem \eqref{P-SN} admits nonconstant
solutions, see \cite[Corollary 6.1]{Bidaut-Veron}, \cite[Theorem 0.5]{Terracini} and \cite[Theorem 1.3]{YanYanLi}.
Similar result holds for some values $p>\frac{N+1}{N-3}$, see \cite{Brezis-Li}.
The complete structure of solution set of \eqref{P-SN} is not yet fully understood,
see \cite{YanYanLi,Veron-Ponce} for some recent results in this direction.
\end{remark}

\begin{remark}
If $\nu>0$ and $p<p_S$ then \eqref{apriori} always holds, see \cite[Remark 3.2]{Bidaut-Veron}.
\end{remark}

We will classify positive solutions of \eqref{P-H} into {\em fast} and {\em slow decay solutions}
according to alternatives \eqref{fast} and \eqref{slow}. Note that for $p>p_S$ slow decay solutions are always infinite energy solutions, because of the slow decay rate \eqref{slow} at infinity.

\section{Radial slow decay solutions in $\R^N$.}\label{RN}
Radial positive solutions $u(|x|)>0$ of \eqref{P-H} in $\R^N\setminus\{0\}$
correspond to the positive solutions $U(r)=u(r)$ of the initial value problem
\begin{equation}\label{P-ODE}
-U''-\frac{N-1}{r}U' +\frac{\nu-\nu_*}{r^2}U=U^p\qquad (r>0),
\end{equation}
which can be studied through the phase plane analysis.

The existence of a family of regular at the origin slow--decay solutions of \eqref{P-ODE} in the
Laplacian case $\nu=\nu_*$ is well--known and goes back at least to \cite{JL}.

\begin{theorem}\label{t-U-lambda}
Let $p_S<p<p^*$. Then for any $\lambda>0$ equation \eqref{P-ODE} admits a unique
positive solution $U_\lambda\in C^2(0,\infty)$ such that
\beq\label{init-cond}
\lim_{r\to 0}\frac{U_\lambda(r)}{r^{-\nu_\ast+\nu}}=\lambda,\qquad
\lim_{r\to\infty}\frac{U_\lambda(r)}{r^{-\frac{2}{p-1}}}=C_{p,\nu}.
\eeq
Moreover,
\beq\label{rescaled}
U_\lambda(r)=\lambda^\frac{2}{p-1}U_1(\lambda r)\qquad\forall\lambda>0.
\eeq
Further, for $\lambda\in(0,\infty]$ the following properties hold:
\begin{itemize}

\item[(i)]
if $pC_{p,\nu}^{p-1}\le\nu^2$ then solutions $U_\lambda$ are stable and ordered
in the sense that $0<\lambda_1<\lambda_2\le\infty$ implies
$U_{\lambda_1}(r)<U_{\lambda_2}(r)$ for every $r\ge 0$ and in addition,
\beq\label{beta-plus}
\lim_{r\to\infty}\frac{U_{\lambda_2}(r)-U_{\lambda_1}(r)}{r^{-\nu_*}}>0;
\eeq
\smallskip

\item[(ii)]
if $pC_{p,\nu}^{p-1}>\nu^2$
then solutions $U_\lambda$ unstable and oscillate,
in the sense that $0<\lambda_1<\lambda_2\le\infty$
implies that $U_{\lambda_2}(r)-U_{\lambda_1}(r)$ changes sign in $(R,+\infty)$
for arbitrary $R>0$.

\end{itemize}

\end{theorem}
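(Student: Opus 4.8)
The plan is to reduce the radial problem \eqref{P-ODE} to an autonomous planar system by Fowler's (Emden--Fowler) substitution and then read off all the assertions from the phase portrait. Writing $\alpha:=\frac{2}{p-1}$, $t:=\log r$ and $w(t):=r^{\alpha}U(r)$, equation \eqref{P-ODE} becomes
\beq
\ddot w+b\,\dot w+c\,w+w^p=0,\qquad b:=N-2-\tfrac{4}{p-1}=2(\nu_*-\alpha),\quad c:=(\nu_*-\alpha)^2-\nu^2,
\eeq
so that $c=-C_{p,\nu}^{p-1}$ and, since $p>p_S$ forces $\alpha<\nu_*$, the damping is genuine: $b>0$. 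Setting $y=\dot w$ gives $\dot w=y$, $\dot y=-by-cw-w^p$, with equilibria $O=(0,0)$ and $P=(C_{p,\nu},0)$. Linearising at $O$ yields eigenvalues $\mu_\pm=(\alpha-\nu_*)\pm\nu$, which are exactly the indicial exponents at the origin of Lemma~\ref{Phragmen}; for $p_S<p<p^*$ they satisfy $\mu_-<0<\mu_+$, so $O$ is a saddle whose unstable eigendirection carries the regular behaviour $U\sim\lambda\,r^{-\nu_*+\nu}$. Linearising at $P$ gives the characteristic equation $\mu^2+b\mu+(p-1)C_{p,\nu}^{p-1}=0$ with discriminant $\Delta=pb^2-4(p-1)\nu^2$, and a direct computation shows that $\Delta\ge0$ is equivalent to the stability condition \eqref{alg}. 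Hence $P$ is a stable node precisely when $pC_{p,\nu}^{p-1}\le\nu^2$ and a stable focus otherwise.

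For existence and uniqueness I would track the branch $\Gamma$ of the unstable manifold of $O$ entering $\{w>0,\,y>0\}$, using the Lyapunov function
\beq
E(w,y):=\tfrac12 y^2+\tfrac{c}{2}w^2+\tfrac{w^{p+1}}{p+1},\qquad \dot E=-b\,y^2\le0.
\eeq
Since $E(O)=0$ and $E$ strictly decreases off $\{y=0\}$, along $\Gamma$ one has $E<0$; as $E=\tfrac12y^2\ge0$ on $\{w=0\}$, the orbit can never return to $w=0$ and so stays in $\{w>0\}$, where the sublevel sets of $E$ are bounded (because $\frac{w^{p+1}}{p+1}\to\infty$), so $\Gamma$ is bounded. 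LaSalle's invariance principle then forces $\Gamma$ to converge to an equilibrium with $E<0$, which can only be $P$; this produces a solution $U_1$ with $w(t)\to C_{p,\nu}$, i.e. the slow decay at infinity, and $w\sim\text{const}\cdot e^{\mu_+t}$ at $-\infty$, i.e. the regular behaviour at the origin. Uniqueness is immediate since the unstable manifold of a planar saddle is one dimensional. Finally, the scaling invariance $U(r)\mapsto\lambda^{\alpha}U(\lambda r)$ of \eqref{P-ODE} is exactly the time translation $w(t)\mapsto w(t+\log\lambda)$ of $\Gamma$, which produces the whole family and formula \eqref{rescaled}, while reading off the prefactors as $t\to\pm\infty$ gives \eqref{init-cond}.

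For part (i), assume \eqref{alg}. The node structure at $P$ should yield that $\Gamma$ approaches $(C_{p,\nu},0)$ monotonically, i.e. $w$ increases from $0$ to $C_{p,\nu}$ with $y=\dot w>0$ throughout; in particular $w<C_{p,\nu}$, that is $U_\lambda<U_\infty$ pointwise. Since $U_\lambda(r)=r^{-\alpha}w(\log r+\log\lambda)$, strict monotonicity of $w$ in $t$ is equivalent to the ordering $U_{\lambda_1}<U_{\lambda_2}$ for $\lambda_1<\lambda_2$. Stability of each $U_\lambda$ then follows from $U_\lambda<U_\infty$: in \eqref{stable} one estimates $pU_\lambda^{p-1}\le pC_{p,\nu}^{p-1}|x|^{-2}\le\nu^2|x|^{-2}$ and concludes by Hardy's inequality \eqref{Hardy-ineq}, exactly as for $U_\infty$. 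The asymptotic \eqref{beta-plus} comes from the linearisation at $P$: along the node $w(t)-C_{p,\nu}\sim A\,e^{\mu_+t}$ with $\mu_+=\frac{-b+\sqrt\Delta}{2}$ the slow eigenvalue, whence $U_{\lambda_2}-U_{\lambda_1}\sim\text{const}\cdot r^{\beta_+}$ with $\beta_+=\mu_+-\alpha=-\nu_*+\sqrt{\nu^2-pC_{p,\nu}^{p-1}}\ge-\nu_*$ and a positive constant.

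For part (ii), when \eqref{alg} fails $P$ is a focus and $w(t)-C_{p,\nu}\sim e^{-bt/2}\big(A\cos\omega t+B\sin\omega t\big)$ with $\omega=\tfrac12\sqrt{-\Delta}>0$. As $U_{\lambda_2}(r)-U_{\lambda_1}(r)=r^{-\alpha}\big(w(t+\log\lambda_2)-w(t+\log\lambda_1)\big)$ with $t=\log r$, the bracket is again a nontrivial damped oscillation of the same frequency and therefore has zeros beyond every $R$, giving the claimed sign changes in $(R,+\infty)$. Instability follows because this oscillation furnishes a sign-changing solution of the linearised equation $-\Delta\phi+\frac{\nu^2-\nu_*^2}{|x|^2}\phi-pU_\lambda^{p-1}\phi=0$, and by a Sturm/Allegretto--Piepenbrink argument a sign-changing solution precludes nonnegativity of the form in \eqref{stable}. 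The hard part is precisely the passage from the local spectral dichotomy at $P$ to these global conclusions: in the node case proving that $\Gamma$ never overshoots $w=C_{p,\nu}$ (global monotonicity), which I would secure by a nullcline and forward-invariant-region argument adapted to the node, and in the focus case showing that the sign changes of the difference genuinely persist all the way to $r=\infty$ rather than being cancelled by the phase shift.
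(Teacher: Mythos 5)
For the existence and uniqueness statements and for part (i), your argument is essentially the paper's own proof: the same Emden--Fowler reduction (your $(b,c)$ are the paper's $(2\beta,-\gamma)$), the same Lyapunov function, the heteroclinic from the saddle $(0,0)$ to the attractor $(C_{p,\nu},0)$, uniqueness from one-dimensionality of the unstable manifold, monotonicity of $w$ in the node case giving the ordering and $U_\lambda<U_\infty$, stability via $pU_\lambda^{p-1}\le pC_{p,\nu}^{p-1}|x|^{-2}\le\nu^2|x|^{-2}$ together with Hardy's inequality \eqref{Hardy-ineq}, and \eqref{beta-plus} from the slow eigenvalue at the attractor. Your LaSalle argument for why the unstable branch is the heteroclinic is in fact a more self-contained justification than the paper's citation of \cite[p.~52]{QS}, and your discriminant identity $\Delta=pb^2-4(p-1)\nu^2\ge0\iff pC_{p,\nu}^{p-1}\le\nu^2$ is correct; your deferral of the global monotonicity of $w$ in the node case to a nullcline/invariant-region argument is at the same level of detail as the paper's citation of \cite[p.~53]{QS}.

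Part (ii) is where you genuinely diverge, and it is exactly where your own text concedes a gap. The paper does not complete the spiral analysis at all: it says the focus structure only ``suggests'' oscillation, and proves (ii) as a special case of Theorem \ref{t-osc}, by an argument that never uses asymptotics at the focus. Namely, since each $U_\lambda$ satisfies \eqref{sub-ast}, the condition $pC_{p,\nu}^{p-1}>\nu^2$ gives $pU_\lambda^{p-1}(x)\ge(\nu^2+\eps)|x|^{-2}$ for large $|x|$; semistability would then imply $\int|\nabla\varphi|^2\,dx\ge(\nu_*^2+\eps)\int\varphi^2|x|^{-2}\,dx$ for all test functions supported in an exterior domain, contradicting the optimality (by scale invariance) of the Hardy constant --- this gives instability; and if $h:=U_{\lambda_2}-U_{\lambda_1}$ had a fixed sign in an exterior domain, convexity of $t\mapsto t^p$ would make $h$ a nonnegative nontrivial solution of $-\Delta h+\tfrac{\nu^2-\nu_*^2}{|x|^2}h\ge\tfrac{\nu^2+\eps}{|x|^2}h$, which is impossible by \cite[Corollary 3.2]{LLM} --- this gives oscillation, after swapping the roles of $\lambda_1$ and $\lambda_2$. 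Your proposed completion is workable: the cancellation you worry about cannot occur, because the damped-oscillation amplitudes of two distinct time translates differ by the factor $e^{-b(\theta_2-\theta_1)/2}\neq1$, so their difference is again a nontrivial damped oscillation. But making the leading asymptotics $w(t)-C_{p,\nu}=e^{-bt/2}(A\cos\omega t+B\sin\omega t)+o(e^{-bt/2})$ with $(A,B)\neq(0,0)$ rigorous requires asymptotic integration of perturbed linear systems (Hartman--Grobman gives only topological conjugacy, not rates), and your Sturm/Allegretto--Piepenbrink instability step is likewise only sketched. The paper's route avoids all of this machinery, applies to nonradial sub- and supersolutions, and is what later makes the sharpness of \eqref{alg} in exterior domains come for free; if you keep your route you must supply the asymptotic-integration step, otherwise reduce (ii) to Theorem \ref{t-osc} as the paper does.
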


The proof of the theorem follows the exposition in \cite[pp.50-53]{QS}
with minor adjustments needed to accommodate $\nu\neq\nu_*$.
We present the sketch of the arguments for the readers convenience.

\begin{proof}[Proof of Theorem~\ref{t-U-lambda}]
Using the transformation
\beq\label{change}
w(t)=r^\frac{2}{p-1}U(r),\qquad t=\log(r),
\eeq
problem \eqref{P-ODE} becomes an autonomous second order differential equation
\beq\label{W-ODE}
w''+2\beta w'+w^p-\gamma w=0,\qquad t\in\R,
\eeq
where since \(p_S < p < p^*\),
$$\beta:=\nu_*-\frac{2}{p-1}>0,\quad \text{and} \quad  \gamma=C_{p,\nu}^{p-1}=\nu^2-\left(\nu_*-\frac{2}{p-1}\right)^2>0.$$

Set
$${\mathcal E}(w)={\mathcal E}(w,w'):=\frac{1}{2}|w'|^2-\frac{\gamma}{2}w^2+\frac{1}{p+1}w^{p+1}.$$
Then ${\mathcal E}$ is a Lyapunov function for \eqref{W-ODE} and
$$\frac{d}{dt}{\mathcal E}(w(t))=-2\beta(w'(t))^2\le 0.$$
Set $x:=w$ and $y:=w'$. Then \eqref{W-ODE} can be written as an autonomous first order system
$$
\begin{pmatrix}x' \\ y'\end{pmatrix}=\begin{pmatrix}y \\ -2\beta y+\gamma x-x^p\end{pmatrix}=:\Phi(x,y),
$$
which possesses two equilibria
$$(0,0)\quad\text{and}\quad(\gamma^\frac{1}{p-1},0)$$
in the half--space $\{(x,y):x\ge 0\}$. Denote
$$A_0:=\nabla\Phi(0,0)=\left(\begin{array}{cc}0 & 1 \\ \gamma & -2\beta\end{array}\right),
\qquad A_\ast:=\nabla\Phi(\gamma,0)=\left(\begin{array}{cc}0 & 1 \\ -(p-1)\gamma & -2\beta\end{array}\right).$$
The matrix $A_0$ has two real eigenvalues
$$\alpha_\pm:=-\beta\pm\sqrt{\beta^2+\gamma}=\frac{2}{p-1}-\nu_*\pm\nu,$$
so that $\alpha_-<0<\alpha_+$. The corresponding eigenvectors are $(1,\alpha_+)$ and $(1,\alpha_-)$,
that is $(0,0)$ is a saddle point of the vector field $\Phi$.
The matrix $A_\ast$ has two eigenvalues
$$\alpha^\ast_\pm:=-\beta\pm\sqrt{\beta^2-(p-1)\gamma},$$
the corresponding eigenvectors are $(1,\alpha^*_+)$ and $(1,\alpha^*_-)$.
Clearly $\mathrm{Re}(\alpha^\ast_\pm)<0$, so $(\gamma^\frac{1}{p-1},0)$ is always an attractor.
Note also that $\alpha^\ast_\pm$ is real if and only if $pC_{p,\nu}^{p-1}\le\nu^2$.

Using the Lyapunov function $\mathcal E$ one can show that the trajectory tangent
at the origin to the eigenvector  $(1,\alpha_+)$
is a heteroclinic orbit which connects the equilibria $(0,0)$ and $(\gamma^\frac{1}{p-1},0)$,
see \cite[p.52]{QS}. Moreover, since $(0, 0)$ is a hyperbolic saddle-point, the uniqueness of such hetereclinic orbit follows by standard arguments. The corresponding solution $w(t)$ exists for all $t\in\R$ and satisfies
\beq\label{init}
\lim_{t\to-\infty}w(t)=0,\qquad \lim_{t\to+\infty}w(t)=\gamma^\frac{1}{p-1}.
\eeq
Moreover, we can assume that $w(t)$ satisfies the normalization condition
\beq\label{normal}
\lim_{t\to -\infty}\frac{w(t)}{e^{\alpha_+ t}}=1.
\eeq
Since \eqref{W-ODE} is autonomous,
$w(t+\theta)$ is also a solution of \eqref{W-ODE}
that corresponds to the same heteroclinic orbit, for any $\theta\in\R$.
Given $\theta\in\R$, set $\lambda:=e^\theta$.
Then
$$U_\lambda(r):=r^{-\frac{2}{p-1}}w(\log(\lambda r))=\lambda^\frac{2}{p-1}U(\lambda r),$$
and $U_\lambda$ satisfies \eqref{init-cond} in view of \eqref{normal} and \eqref{init},
that is $U_\lambda$ is the required solution of \eqref{P-ODE}.
The uniqueness of $U_\lambda$ follows from the uniqueness of $w(t)$ since \eqref{change}
defines a one to one correspondence between solutions of \eqref{P-ODE} and \eqref{W-ODE}.

To understand oscillation and stability properties of $U_\lambda$ note
that the eigenvalues $\alpha^\ast_\pm$ are real iff
$$\beta^2\ge(p-1)\gamma,$$
which is equivalent to the stability condition \eqref{alg}.
Note that then
$$\alpha_-<\alpha_-^*\le-\nu_*\le\alpha_+^*<\alpha_+.$$
If the roots $\alpha^\ast_\pm$ are real then arguments similar to \cite[p.53]{QS}
show that the trajectory $w(t)$ is monotone increasing in $t$ for all $t\in\R$.
Hence the solutions $U_\lambda(r)$ are monotone increasing in $\lambda$.
In particular, $U_\lambda(r)<U_\infty(r)$ for any $\lambda>0$ and solutions
$U_\lambda$ are ordered.
Further, in view of \eqref{alg} the solution $U_\infty$ is stable.
Since $U_\lambda(r)<U_\infty(r)$, we obtain
$$pU_\lambda^{p-1}(|x|)\le pU_\infty^{p-1}(|x|)=p\gamma|x|^2\le\nu^2|x|^2.$$
By Hardy's inequality we conclude that
$$\int|\nabla\varphi|^2\,dx+\big(\nu^2-\nu_*^2)\int\frac{\varphi^2}{|x|^2}\,dx-p\int U_\lambda^{p-1}(|x|)\varphi^2\ge 0$$
for all $\varphi\in C^\infty_c(\R^N)$, that is $U_\lambda$ is a stable solution of \eqref{P-H}.
In addition, similarly to \cite[Remark 9.4]{QS}, we conclude that
$$\lim_{t\to\infty}\frac{w'(t)}{w(t)-\gamma^\frac{1}{p-1}}=\alpha^*_+\ge -\beta,$$
which after returning to the original variables and combined with \eqref{rescaled} implies \eqref{beta-plus}.
\smallskip

If $\alpha^\ast_\pm$ are complex then similarly to \cite[p.52]{QS} one can
see that the trajectory $(x(t),y(t))$ spirals infinitely many times around the attractor $(\gamma,0)$
which suggests that the solutions $U_\lambda$ oscillate in the sense of $(ii)$.
The detailed prove of oscillation and instability of $U_\lambda$ when $\alpha^\ast_\pm$ are complex is a particular
case of a more general Theorem \ref{t-osc} which will be proved in the next Section.
\end{proof}

\begin{remark}
In the subcritical case $p_*<p\le p_S$ equation \eqref{P-ODE} has no positive slow decay solution which satisfy \eqref{init-cond}.
Indeed, if $p=p_S$ then $\beta=0$, $\mathrm{Re}(\alpha^\ast_\pm)=0$ and the stationary point $(\gamma^\frac{1}{p-1},0)$ is a center. One can show that the trajectory tangent at the origin to the eigenvector $(1,\alpha_+)$ is a homoclinic orbit. This homoclinic corresponds to an explicit one parameter family of finite energy solutions of \eqref{P-ODE},
see \cite[pp.253-254]{Terracini}.
If $p_*<p<p_S$ then $\beta>0$, $\mathrm{Re}(\alpha^\ast_\pm)>0$ and the stationary point $(\gamma^\frac{1}{p-1},0)$ is repelling.
Hence a heteroclinic between  $(\gamma^\frac{1}{p-1},0)$ and $(0,0)$ originates at $(\gamma^\frac{1}{p-1},0)$
and converges to $(0,0)$ tangentially to the eigenvector $(1,\alpha_-)$. This heteroclinic corresponds to a positive solution of
\eqref{P-ODE} which decays at infinity as $O\big(|x|^{-\nu_\ast-\nu}\big)$ and has a singularity at the origin
of order $O\big(|x|^{-\frac{2}{p-1}}\big)$.
\end{remark}

\section{Slow decay solutions in exterior domains.}\label{S-ext}

First we justify that the value of the nonexistence exponent $p^*$ is sharp.
The result, which is first appeared in \cite[Remark 3.2]{Bidaut-Veron},
is an immediate consequence of Lemma \ref{lemma-Veron}.

\begin{theorem}
Let $p\ge p^*$. Then \eqref{P-H} has no slow decay solutions in $\R^N\setminus \bar B_R$,
for arbitrary $R>0$.
\end{theorem}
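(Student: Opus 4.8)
The plan is to reduce nonexistence of slow decay solutions to nonexistence of positive solutions of the associated spherical equation \eqref{P-SN}, and then to exclude the latter by a one-line integration once the zeroth-order coefficient is nonpositive.

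First I would observe that, by definition, a slow decay solution $u$ of \eqref{P-H} in $\R^N\setminus\bar B_R$ satisfies the alternative \eqref{slow} for some positive solution $\omega\in C^2(S^{N-1})$ of
$$-\Delta_{S^{N-1}}\omega+C_{p,\nu}^{p-1}\omega=\omega^p,\qquad\omega>0\quad\text{in $S^{N-1}$};$$
this is exactly the slow decay branch of Lemma \ref{lemma-Veron}, which applies since $p\geq p^*>p_S$ guarantees $p\neq p_S$. Hence it suffices to show that \eqref{P-SN} has no positive solution when $p\geq p^*$.

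Next I would pin down the sign of the coefficient. Since $p^*$ is finite only when $\nu<\nu_*$, the hypothesis $p\geq p^*=1+\frac{2}{\nu_*-\nu}$ gives $\frac{2}{p-1}\leq\nu_*-\nu$, hence $\nu_*-\frac{2}{p-1}\geq\nu>0$ and therefore
$$C_{p,\nu}^{p-1}=\nu^2-\Big(\nu_*-\tfrac{2}{p-1}\Big)^2\leq 0,$$
with equality precisely at $p=p^*$. (When $\nu\geq\nu_*$ one has $p^*=\infty$, so the hypothesis $p\geq p^*$ is void and the statement holds vacuously.)

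Finally I would integrate the spherical equation over the closed manifold $S^{N-1}$. Since $\int_{S^{N-1}}\Delta_{S^{N-1}}\omega\,d\sigma=0$, this yields
$$C_{p,\nu}^{p-1}\int_{S^{N-1}}\omega\,d\sigma=\int_{S^{N-1}}\omega^p\,d\sigma>0,$$
the strict positivity using $\omega>0$. As $\omega>0$ forces $\int_{S^{N-1}}\omega\,d\sigma>0$ while $C_{p,\nu}^{p-1}\leq 0$, the left-hand side is nonpositive, a contradiction. I do not expect a real obstacle here: the only delicate point is that the slow decay hypothesis genuinely supplies the spherical profile $\omega$ through Lemma \ref{lemma-Veron}, after which the absence of a boundary term on the compact sphere $S^{N-1}$ makes the integration argument conclusive.
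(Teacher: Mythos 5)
Your proof is correct and takes essentially the same route as the paper: reduce the claim to nonexistence of positive solutions of the spherical equation \eqref{P-SN} via Lemma \ref{lemma-Veron}, then rule those out because $C_{p,\nu}^{p-1}\le 0$ when $p\ge p^*$ (with the case $\nu\ge\nu_*$ vacuous). The only difference is that you make explicit the integration over $S^{N-1}$ that the paper leaves as an unstated ``hence,'' which is a reasonable amount of detail to add.
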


\proof
Simply note that for $p>p^*$ one has $C_{p,\nu}\le 0$ and hence
the equation \eqref{P-SN} on the sphere does not have any positive solution.
Then the conclusion follows from Lemma \ref{lemma-Veron}.
\qed

\begin{remark}
If $p>p^*$ then the slow decay rate is incompatible with the upper bound \eqref{Phragmen-infty}
of Lemma \ref{Phragmen}. This argument however does not apply when $p=p_*$.
\end{remark}

Next we justify sharpness of the stability and nonoscillation condition \eqref{alg}.
The result below extends oscillation statement of Theorem \ref{t-U-lambda} beyond radial setting.
See also \cite[Proposition 3.5]{Wang} for related results in the pure Laplacian case $\nu=\nu_*$.

\begin{theorem}\label{t-osc}
Let $p>p_S$, $\nu>0$ and $pC_{p,\nu}^{p-1}>\nu^2$.
Let $U_\ast>0$ be a subsolution of \eqref{P-H} such that
\beq\label{sub-ast}
\liminf_{|x|\to\infty}\frac{U_\ast(x)}{|x|^{-\frac{2}{p-1}}}\ge C_{p,\nu}.
\eeq
Then $U_\ast$ is unstable. Further, if  $u>0$ is a supersolution of \eqref{P} then either $u=U_\ast$,
or $\big(u-U_\ast\big)_-\neq 0$ in $\R^N\setminus \bar B_R$, for arbitrary large $R>0$.
\end{theorem}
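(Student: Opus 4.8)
The plan is to split the statement into two coupled parts. First I would show that $U_\ast$ is unstable in \emph{every} exterior domain $\R^N\setminus\bar B_R$; then I would use this to exclude the ordering $u\ge U_\ast$ near infinity, which simultaneously completes the oscillation claim of Theorem~\ref{t-U-lambda}(ii). The unifying device is the logarithmic substitution \eqref{change}, which reduces the radial Hardy operator to a constant-coefficient one-dimensional one: for a radial test function $\varphi(x)=|x|^{-\nu_*}\eta(\log|x|)$ with $\eta\in C^\infty_c(\R)$, the cross term integrates to zero and one obtains
\[
\int|\nabla\varphi|^2\,dx + c\int\frac{\varphi^2}{|x|^2}\,dx = \omega_{N-1}\int_\R\big((\eta')^2 + (\nu_*^2+c)\eta^2\big)\,dt,
\]
where $\omega_{N-1}=\abs{S^{N-1}}$ and $t=\log|x|$. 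The key observation is that the effective Hardy coefficient at infinity drops below the threshold $-\nu_*^2$ exactly when $pC_{p,\nu}^{p-1}>\nu^2$.

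For instability, from \eqref{sub-ast} and continuity there is, for each small $\eps>0$, a radius $R_0$ with $U_\ast(x)\ge(C_{p,\nu}-\eps)|x|^{-2/(p-1)}$ for $|x|\ge R_0$, hence $pU_\ast^{p-1}(x)\ge p(C_{p,\nu}-\eps)^{p-1}|x|^{-2}$. Since $pC_{p,\nu}^{p-1}>\nu^2$, for $\eps$ small we may write $p(C_{p,\nu}-\eps)^{p-1}=\nu^2+\delta$ with $\delta>0$. Restricting the second variation \eqref{stable} to radial $\varphi=|x|^{-\nu_*}\eta(\log|x|)$ supported in $\{|x|>R\}$ with $R\ge R_0$ and bounding $-pU_\ast^{p-1}$ from above by this lower bound, the displayed identity with $c=-\nu_*^2-\delta$ gives
\[
\int|\nabla\varphi|^2\,dx + (\nu^2-\nu_*^2)\int\frac{\varphi^2}{|x|^2}\,dx - p\int U_\ast^{p-1}\varphi^2\,dx \le \omega_{N-1}\int_\R\big((\eta')^2-\delta\eta^2\big)\,dt.
\]
The one-dimensional form $\int\big((\eta')^2-\delta\eta^2\big)\,dt$ has negative infimum, so a long, slowly varying bump $\eta$ supported in $\{t>\log R\}$ makes the right-hand side strictly negative. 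As $R>0$ is arbitrary, \eqref{stable} fails in every exterior domain, so $U_\ast$ is unstable.

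For the dichotomy, suppose the second alternative fails, so $u\ge U_\ast$ in some $\R^N\setminus\bar B_R$; set $\psi:=u-U_\ast\ge 0$ and assume $\psi\not\equiv 0$. Subtracting the supersolution inequality for $u$ from the subsolution inequality for $U_\ast$ and using the convexity bound $u^p-U_\ast^p\ge pU_\ast^{p-1}\psi$ yields $-\Delta\psi+V\psi\ge 0$ in $\R^N\setminus\bar B_R$, where $V:=\frac{\nu^2-\nu_*^2}{|x|^2}-pU_\ast^{p-1}$; the strong maximum principle then upgrades $\psi\ge 0$ to $\psi>0$. Writing an arbitrary $\varphi\in C^\infty_c(\R^N\setminus\bar B_R)$ as $\varphi=\psi\zeta$ and integrating by parts gives the ground-state identity
\[
\int|\nabla\varphi|^2 + V\varphi^2\,dx = \int\psi^2|\nabla\zeta|^2\,dx + \int\zeta^2\psi\,(-\Delta\psi+V\psi)\,dx \ge 0,
\]
whose left-hand side is precisely the form in \eqref{stable}. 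Thus $U_\ast$ would be stable in $\R^N\setminus\bar B_R$, contradicting the instability just established. Hence $\psi\equiv 0$, so $u=U_\ast$ on the exterior domain, and unique continuation then forces $u=U_\ast$, as claimed.

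The main obstacle I expect is making the ground-state substitution fully rigorous: one must know that $\psi$ is locally bounded below away from zero and regular enough on the support of $\zeta$ for $\varphi/\psi$ to be an admissible multiplier, which follows from the strong maximum principle and interior elliptic regularity, but requires care because of the inverse-square potential. A secondary point is verifying that the instability test functions are genuine elements of $C^\infty_c(\R^N\setminus\bar B_R)$, which is ensured by taking smooth radial cutoffs $\eta$; both points are standard once stated carefully.
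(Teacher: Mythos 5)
Your proof is correct, and it diverges from the paper's in an interesting way. For the instability half you and the paper do essentially the same thing: both extract from \eqref{sub-ast} the lower bound $pU_\ast^{p-1}(x)\ge(\nu^2+\delta)|x|^{-2}$ near infinity, after which the paper concludes abstractly by citing the optimality of the constant $\nu_*^2$ in Hardy's inequality (scaling invariance making it optimal on every exterior domain), while you reprove that optimality by hand with the log-scaled test functions $\varphi=|x|^{-\nu_*}\eta(\log|x|)$; your identity and the long-plateau bump construction are correct, so this is the same argument made self-contained. The genuine difference is in the dichotomy. The paper sets $h=u-U_\ast\ge 0$, uses convexity exactly as you do to obtain $-\Delta h+\frac{\nu^2-\nu_*^2}{|x|^2}h\ge\frac{\nu^2+\eps}{|x|^2}h$ in an exterior domain, and then invokes an external nonexistence theorem (\cite[Corollary 3.2]{LLM}: the Hardy operator with effective coupling below $-\nu_*^2$ admits no positive supersolutions in exterior domains). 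You instead run the Allegretto--Piepenbrink ground-state transform on $\psi=u-U_\ast$: a positive supersolution of the linearized operator forces nonnegativity of the stability form \eqref{stable} on the exterior domain, contradicting the instability you have already proved. This buys a self-contained proof in which the two halves of the theorem are logically coupled --- the dichotomy is deduced from the instability --- whereas in the paper they are two independent consequences of the same sub-criticality of the effective Hardy coefficient (indeed, the cited nonexistence result and your ground-state argument are the two directions of the same Allegretto--Piepenbrink equivalence); the cost is the regularity bookkeeping you correctly flag (strong maximum principle with the sign-indefinite potential $V$, admissibility of $\zeta=\varphi/\psi$), all of which is standard. One caveat: your closing appeal to unique continuation to pass from $u=U_\ast$ on an exterior domain to $u=U_\ast$ globally is not justified as stated, since $u$ is only a supersolution and $U_\ast$ only a subsolution, so unique continuation for solutions of an equation does not apply directly; note, however, that the paper's own proof also stops at the conclusion $h=0$ on the exterior domain, so on this point you match, rather than fall short of, its level of rigor.
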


\proof
From \eqref{sub-ast} we obtain
$$pU_\ast^{p-1}(x)\ge(\nu^2+\eps)|x|^{-2}\qquad(|x|>R_\eps),$$
for some $\eps>0$ and $R_\eps\ge R$.
Assume that $U_*$ is semistable, that is there exists $R>0$ such that \eqref{stable} holds in $\R^N\setminus \bar B_R$.
But then we arrive at
$$\int|\nabla\varphi|^2\,dx+\big(\nu^2-\nu_\ast^2\big)\int\frac{\varphi^2}{|x|^2}\,dx
\ge(\nu^2+\eps)\int\frac{\varphi^2}{|x|^2}\,dx\ge 0\quad\forall \varphi\in C^\infty_c(\R^N\setminus \bar B_{R_\eps}),$$
a contradiction to Hardy's inequality. We conclude that $U_*$ is unstable.

Further, set $h=u-U_\ast$ and assume that $h\ge 0$ in $\R^N\setminus \bar B_R$, for some $R>1$.
Then by convexity and \eqref{sub-ast} we obtain
\beqn
-\Delta h+\frac{\nu^2-\nu_*^2}{|x|^2}h&\ge& u^p-U_\ast^p=(U_\ast+h)^p-U_\ast^p\\
&\ge& pU_\ast^{p-1}h =\frac{pC_p^{p-1}}{|x|^2}h\ge\frac{\nu^2+\eps^2}{|x|^2}h\qquad(|x|>R_\eps).
\eeqn
It is well-known that such inequation has no positive solutions, cf. \cite[Corollary 3.2]{LLM}.
We conclude that either $h=0$, or $h$ changes sign in $\R^N\setminus \bar B_{R_\eps}$.
\qed

\begin{remark}
The above result does not exclude possibility that $u<U_*$ in an exterior domain.
The latter is however not possible in the case when both $U_*$ and $u$ are slow decay solutions.
In particular, since all the solutions $U_\lambda$ satisfy \eqref{sub-ast},
the above result includes the oscillation statement (ii) of Theorem \ref{t-U-lambda}.
\end{remark}

Next we show that if the stability assumption \eqref{alg} holds then
slow decay solutions of \eqref{P-H} in exterior domains are well ordered in a certain sense.
We consider the exterior boundary value problem for \eqref{P-H}
\begin{equation}\label{P-H-psi}
\left\{
\begin{array}{rcll}
-\Delta u +\frac{\nu^2-\nu_\ast^2}{|x|^2}u&=&u^p,\quad u>0 &\quad\text{in $\R^N\setminus K$,}\\
u&=&\psi&\quad\text{on $\partial K$,}
\end{array}
\right.
\end{equation}
here $K\ni\{0\}$ is a connected compact set with the smooth boundary $\partial K$,
and $\psi\in C(\partial K)$ is a nonnegative continuous function.

\begin{theorem}\label{t-order}
Let $p>p_S$, $\nu>0$ and $pC_{p,\nu}^{p-1}\le \nu^2$.
Let $U_\ast>0$ be a slow decay solution of \eqref{P-H} in $\R^N\setminus K$ such that for some $R>0$ holds
$$
U_*(x)\le U_\infty(x)\qquad(|x|>R).
$$
Given $\psi\in C(\partial K)$ such that
$$0\le \psi(x)\le {U_\ast}(x)\quad\text{on $\partial K$},$$
problem \eqref{P-H-psi} admits a slow decay solution $U_\ast^\psi$ such that
$$0<U_\ast^\psi\le U_\ast\quad\text{in $\R^N\setminus K$.}$$
Moreover,
\beq\label{beta-minus}
\lim_{|x|\to\infty}\frac{U_\ast(x)-U_\ast^\psi(x)}{|x|^{-\nu_*}}=0.
\eeq
\end{theorem}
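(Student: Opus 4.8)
The plan is to realise $U_\ast^\psi$ as a monotone limit of solutions on truncated exterior domains, squeezed between the trivial subsolution $0$ and the supersolution $U_\ast$, and then to extract positivity, slow decay and the refined asymptotics \eqref{beta-minus} from a single exterior barrier built from the linearisation of \eqref{P-H} at $U_\ast$. Write $\gamma:=C_{p,\nu}^{p-1}$, so that the standing hypothesis reads $p\gamma\le\nu^2$ and $U_\infty^{p-1}=\gamma|x|^{-2}$.

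First I would set $\Omega_n:=B_n\setminus K$ and solve the boundary value problem for \eqref{P-H} on $\Omega_n$ with data $\psi$ on $\partial K$ and $U_\ast$ on $\partial B_n$. Since $0\in K$, the origin is excised from each $\Omega_n$, so the Hardy coefficient is bounded there and the usual sub/supersolution machinery applies to the ordered pair $0\le U_\ast$: the constant $0$ is a subsolution with $0\le\psi$ on $\partial K$, and the given solution $U_\ast$ is a supersolution with $U_\ast\ge\psi$ on $\partial K$. Running the monotone iteration downward from $U_\ast$ produces the maximal solution $u_n$ with $0\le u_n\le U_\ast$ and the prescribed boundary values. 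Comparison on $\Omega_n$ (using $u_{n+1}\le U_\ast=u_n$ on $\partial B_n$ and $u_{n+1}=\psi=u_n$ on $\partial K$) shows $(u_n)$ is nonincreasing in $n$, hence converges pointwise to some $0\le U_\ast^\psi\le U_\ast$; by interior elliptic estimates the convergence is in $C^2_{loc}(\R^N\setminus K)$ and up to $\partial K$, so $U_\ast^\psi$ solves \eqref{P-H-psi}.

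The heart of the matter is a barrier estimate for $h_n:=U_\ast-u_n\ge0$. Writing $U_\ast^p-u_n^p=p\xi^{p-1}h_n$ with $u_n\le\xi\le U_\ast$ and invoking the hypothesis $U_\ast\le U_\infty$ for $|x|>R$, one obtains $p\xi^{p-1}\le pU_\infty^{p-1}=p\gamma|x|^{-2}$, so that $h_n$ is a nonnegative subsolution of the linearised operator $L_0:=-\Delta+\frac{\nu^2-\nu_*^2-p\gamma}{|x|^2}$ on $\{|x|>R\}$. Under the stability hypothesis the indicial roots of $L_0$ are real, namely $-\nu_*\pm\tilde\nu$ with $\tilde\nu:=\sqrt{\nu^2-p\gamma}\in[0,\nu)$, and the fast mode $|x|^{-\nu_*-\tilde\nu}$ is a positive $L_0$-supersolution. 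Choosing $A$ so that $A|x|^{-\nu_*-\tilde\nu}$ dominates $h_n$ on $\partial B_R$ (where $h_n\le U_\ast$ is bounded uniformly in $n$), and noting $h_n=0\le A|x|^{-\nu_*-\tilde\nu}$ on $\partial B_n$, the maximum principle for $L_0$ on the annulus $\{R<|x|<n\}$ yields $h_n\le A|x|^{-\nu_*-\tilde\nu}$ uniformly in $n$. Passing to the limit gives $0\le U_\ast-U_\ast^\psi\le A|x|^{-\nu_*-\tilde\nu}$ for $|x|>R$.

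From this bound everything follows. Since $-\nu_*-\tilde\nu<-\tfrac{2}{p-1}$, the barrier decays strictly faster than $U_\ast\sim C_{p,\nu}|x|^{-2/(p-1)}$, so $U_\ast^\psi\ge\tfrac12 C_{p,\nu}|x|^{-2/(p-1)}>0$ for large $|x|$; the strong maximum principle applied to $-\Delta U_\ast^\psi+\frac{\nu^2-\nu_*^2}{|x|^2}U_\ast^\psi=(U_\ast^\psi)^p\ge0$ then propagates positivity to all of $\R^N\setminus K$. The same two-sided control gives \eqref{apriori}, so Lemma \ref{lemma-Veron} applies, and the lower bound excludes the fast decay alternative, leaving slow decay. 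Finally, when $p\gamma<\nu^2$ is strict we have $\tilde\nu>0$, whence $|x|^{-\nu_*-\tilde\nu}=o(|x|^{-\nu_*})$ and \eqref{beta-minus} is immediate. The main obstacle is the borderline equality case $pC_{p,\nu}^{p-1}=\nu^2$, where $\tilde\nu=0$ and the two modes of $L_0$ degenerate to $|x|^{-\nu_*}$ and $|x|^{-\nu_*}\log|x|$: the crude fast-mode barrier then only delivers $U_\ast-U_\ast^\psi=O(|x|^{-\nu_*})$ rather than $o(|x|^{-\nu_*})$, and forcing the $|x|^{-\nu_*}$ coefficient to vanish requires a sharper barrier (a logarithmically corrected or slightly perturbed-exponent supersolution) together with the maximality of $U_\ast^\psi$ among solutions below $U_\ast$. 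A secondary technical point is the validity of the comparison principle for $L_0$ near the Hardy-critical threshold, which is exactly what the stability condition $p\gamma\le\nu^2$ secures.
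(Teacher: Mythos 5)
Your construction is correct in substance but takes a genuinely different route from the paper. The paper never truncates: it builds a global subsolution $U_\ast-h_\psi$ and supersolution $U_\ast-\eta_\psi$ on all of $\R^N\setminus K$, where $h_\psi$, $\eta_\psi$ are the minimal positive solutions of the linear problems \eqref{lin} with potentials $pU_\ast^{p-1}$, resp.\ $U_\ast^{p-1}$, and boundary data $U_\ast-\psi$; existence comes from Lax--Milgram via \eqref{Hardy-super}, the decay $h_\psi\le C|x|^{\alpha^*_-}$ comes from the comparison principle for Hardy operators \cite{LLM}, convexity of $t\mapsto t^p$ makes $U_\ast-h_\psi$ a subsolution, and the classical sub/supersolution theorem \cite{KZ} on the unbounded domain produces $U_\ast^\psi\ge U_\ast-h_\psi$, so that \eqref{beta-minus} is inherited from the decay of $h_\psi$. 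Your scheme --- maximal solutions on $B_n\setminus K$ between the pair $(0,U_\ast)$, a monotone limit, and then the annulus maximum-principle barrier $A|x|^{-\nu_\ast-\tilde\nu}$ for $h_n=U_\ast-u_n$ --- arrives at the same quantitative control $0\le U_\ast-U_\ast^\psi\le A|x|^{-\nu_\ast-\tilde\nu}$ (your exponent $-\nu_\ast-\tilde\nu$ is exactly the paper's $\alpha^*_-$) by purely bounded-domain arguments, avoiding Lax--Milgram and the exterior linear theory altogether. Two points in your write-up deserve emphasis: the ordering $u_{n+1}\le u_n$ genuinely requires the maximality of $u_n$ (two solutions of the superlinear equation with ordered boundary data need not be ordered), which you do invoke; and a slow decay solution satisfies $U_\ast/|x|^{-2/(p-1)}\to\omega$ for some positive solution $\omega$ of \eqref{P-SN} that need not be the constant $C_{p,\nu}$ --- harmless for your positivity argument, since any positive $\omega$ gives $U_\ast\ge c|x|^{-2/(p-1)}$ for large $|x|$.

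The obstacle you flag in the borderline case $pC_{p,\nu}^{p-1}=\nu^2$ is a genuine gap in your argument: there $\tilde\nu=0$ and your barrier only gives $U_\ast-U_\ast^\psi=O(|x|^{-\nu_\ast})$, not the $o(|x|^{-\nu_\ast})$ required by \eqref{beta-minus}. You should be aware, however, that the paper's own proof breaks at exactly the same point: when $pC_{p,\nu}^{p-1}=\nu^2$ the quadratic form associated with \eqref{lin} is merely nonnegative, not coercive, on $D^1_0(\R^N\setminus K)$ (removing the compact set $K$ does not improve the Hardy constant, since optimizing sequences may concentrate at infinity), so the Lax--Milgram step fails as stated; moreover the displayed chain $-\nu_\ast-\nu<\alpha'_-\le\alpha^*_-<-\nu_\ast$ is false there, since $\alpha^*_-=-\nu_\ast$, and the bound $h_\psi\le C|x|^{\alpha^*_-}$ then no longer yields \eqref{beta-minus}. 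Thus in the strict regime $pC_{p,\nu}^{p-1}<\nu^2$ your proof is complete and fully equivalent in strength to the paper's; in the equality case neither argument establishes \eqref{beta-minus}, and the sharper (logarithmically corrected) barrier you sketch would be needed in either approach.
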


\proof
We are going to construct a sub--solution $\u$ and a super--solution $\U$ such that
$$\text{$0\le \u\le \U\le U_\ast\;$ and $\;\u=\U=\psi$ on $\partial K$.}$$
Then the existence of a solution $U_\ast^\psi$ between $\u$ and $\U$ follows
via the classical sub and super--solution argument, cf. \cite[Theorem 38.1]{KZ}.
\medskip

{\sl Subsolution $\u$.}
Let $h_\psi>0$ be the minimal positive solution to the problem
\beq\label{lin}
-\Delta h +\frac{\nu^2-\nu_*^2}{|x|^2} h = pU_\ast^{p-1}h \quad\text{in $\R^N\setminus K$},\qquad h=U_\ast-\psi\quad\text{on $\partial K$}.
\eeq
The existence of such a solution is ensured by the Lax--Milgram theorem.
Indeed, by the assumptions
\beq\label{Hardy-super}
pU_\ast^{p-1}(x)\le pU_\infty^{p-1}(x)\le pC_{p,\nu}^{p-1}|x|^{-2}\le\nu^2|x|^{-2}.
\eeq
Hence the corresponding to \eqref{lin} quadratic form is coercive on the Sobolev
space $D^1_0(\R^N\setminus K)$. Moreover, from Lemma \ref{lemma-Veron} we conclude that
given a large $R>0$ there exists $m\in(0,\nu^2]$ such that
$$pU_\ast^{p-1}(x)\ge m|x|^{-2}\qquad(|x|>R).$$
Then a standard application of the comparison principle for Hardy operators (cf. Lemma \ref{Phragmen} and \cite[Lemma A.8]{LLM})
implies the two-sided bound
$$c|x|^{\alpha'_-}\le h_\psi\le C |x|^{\alpha^*_-}\qquad(|x|>R),$$
where $\alpha^*_-$ is the smallest root of
$$-(\alpha+\nu_*-\nu)(\alpha+\nu_*+\nu)=pC_{p,\nu}^{p-1}$$
and  $\alpha'_-$
is the smallest root of
$$-(\alpha+\nu_*-\nu)(\alpha+\nu_*+\nu)=m.$$
Note that $0<m\le pC_{p,\nu}^{p-1}\le\nu^2$, so both equations have real roots and
$$-\nu_*-\nu<\alpha'_-\le\alpha^*_-<-\nu_*<-\frac{2}{p-1}.$$
Set
$$\u:=U_\ast-h_\psi.$$
Then
$$\lim_{|x|\to\infty}\frac{\u(x)}{U_\ast(x)}=1,$$
and by convexity a direct computation shows
$$-\Delta \u = U_\ast^p-pU_\ast^{p-1}h_\psi\le (U_\ast-h_\psi)^p=\u^p\quad\text{in $\R^N\setminus K$},$$
that is $\u$ is the required sub--solution. In addition,
$$\lim_{|x|\to\infty}\frac{U_\ast(x)-\u(x)}{|x|^{-\nu_*}}=\lim_{|x|\to\infty}\frac{h_\psi(x)}{|x|^{-\nu_*}}=0,$$
which implies \eqref{beta-minus}.
\smallskip

{\sl Supersolution $\U$.}
Let $\eta_\psi>0$ be the minimal solution to the problem
$$-\Delta \eta +\frac{\nu^2-\nu_*^2}{|x|^2} h = U_\ast^{p-1}\eta \quad\text{in $\R^N\setminus K$},\qquad \eta=U_\ast-\psi\quad\text{on $\partial K$}.$$
Note that $U_\ast^{p-1}\le p U_\ast^{p-1}$.
Hence, solution $\eta_\psi$ exist simply because \eqref{Hardy-super} applies.
Moreover, a comparison argument similar to the ones above shows that
$$0<\eta_\psi<h_\psi\quad\text{in $\R^N\setminus K$}.$$
Define
$$\U:=U_\ast-\eta_\psi.$$
Then
$$\lim_{|x|\to\infty}\frac{\U(x)}{U_\ast(x)}=1,$$
and
$$-\Delta \U = U_\ast^{p-1}(U_\ast-\eta_\psi)\ge \big(U_\ast-\eta_\psi\big)^{p-1}(U_\ast-\eta_\psi)=\U^p
\quad\text{in $\R^N\setminus K$},$$
that is $\U$ is the required super--solution.
\qed

The next result shows that under suitable assumptions on the boundary data the exterior problem \eqref{P-H-psi}
admits a continuum of distinct slow decay positive solution, which in a certain sense could be interpreted
as a perturbation of the family of slow decay solutions $(U_\lambda)$ constructed in Theorem \ref{t-U-lambda}.

\begin{corollary}\label{cor-0}
Let $p>p_S$, $\nu>0$ and $pC_{p,\nu}^{p-1}\le \nu^2$.
Then for every $\psi\in C(\partial K)$ such that
\beq\label{psi-less}
0\le \psi(x)< {U_\infty}(x)\quad\text{on $\partial K$},
\eeq
problem \eqref{P-H-psi} admits a continuum of distinct positive slow decay solutions.
\end{corollary}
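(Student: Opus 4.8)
The plan is to realise the desired continuum as the images under Theorem~\ref{t-order} of the one-parameter family $(U_\lambda)_{\lambda>0}$ built in Theorem~\ref{t-U-lambda}, using the sharp asymptotic rates \eqref{beta-plus} and \eqref{beta-minus} to keep the resulting solutions pairwise distinct. Since the standing hypothesis $pC_{p,\nu}^{p-1}\le\nu^2$ is precisely the stability condition \eqref{alg}, part~(i) of Theorem~\ref{t-U-lambda} is in force: each $U_\lambda$ is a slow decay solution with $U_\lambda<U_\infty$ on $\R^N\setminus\{0\}$, the $U_\lambda$ are strictly increasing in $\lambda$, and the scaling \eqref{rescaled} together with the normalisation \eqref{init-cond} gives $U_\lambda\to U_\infty$ pointwise as $\lambda\to\infty$. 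Restricting each $U_\lambda$ to $\R^N\setminus K$ produces a slow decay solution of \eqref{P-H} bounded above by $U_\infty$ in every exterior region, i.e. an admissible $U_\ast$ for Theorem~\ref{t-order}.

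First I would fix the boundary datum. As $0\in K$, the boundary $\partial K$ is a compact subset of $\R^N\setminus\{0\}$, and on it the continuous functions $U_\lambda$ increase monotonically to the continuous limit $U_\infty$; by Dini's theorem the convergence $U_\lambda\to U_\infty$ is uniform on $\partial K$. Hence the strict inequality \eqref{psi-less} yields a threshold $\lambda_0>0$ such that $\psi\le U_\lambda$ on $\partial K$ for every $\lambda\ge\lambda_0$. For each such $\lambda$ the pair $(U_\lambda,\psi)$ meets the hypotheses of Theorem~\ref{t-order}, which delivers a slow decay solution $U_\lambda^\psi$ of the exterior problem \eqref{P-H-psi} with the same boundary datum $\psi$, satisfying $0<U_\lambda^\psi\le U_\lambda$ together with the refined asymptotics
$$\lim_{|x|\to\infty}\frac{U_\lambda(x)-U_\lambda^\psi(x)}{|x|^{-\nu_*}}=0.$$
This gives a family $(U_\lambda^\psi)_{\lambda\ge\lambda_0}$ of positive slow decay solutions of \eqref{P-H-psi}.

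The crux is to show that distinct parameters give distinct solutions, and here the two asymptotic rates are decisive. For $\lambda_0\le\lambda_1<\lambda_2$ I would decompose
$$U_{\lambda_2}^\psi-U_{\lambda_1}^\psi=\big(U_{\lambda_2}^\psi-U_{\lambda_2}\big)+\big(U_{\lambda_2}-U_{\lambda_1}\big)+\big(U_{\lambda_1}-U_{\lambda_1}^\psi\big).$$
By the displayed limit above, the first and third brackets are $o(|x|^{-\nu_*})$ as $|x|\to\infty$, whereas the middle bracket is $\sim c\,|x|^{-\nu_*}$ with $c>0$ by \eqref{beta-plus}. Consequently
$$\lim_{|x|\to\infty}\frac{U_{\lambda_2}^\psi(x)-U_{\lambda_1}^\psi(x)}{|x|^{-\nu_*}}=\lim_{|x|\to\infty}\frac{U_{\lambda_2}(x)-U_{\lambda_1}(x)}{|x|^{-\nu_*}}>0,$$
so in particular $U_{\lambda_1}^\psi\neq U_{\lambda_2}^\psi$. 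Thus $\{U_\lambda^\psi:\lambda\ge\lambda_0\}$ is a continuum of distinct positive slow decay solutions of \eqref{P-H-psi}, as required.

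I expect the main difficulty to be bookkeeping rather than conceptual. One must confirm that the restriction of the radial $U_\lambda$ is a legitimate slow decay $U_\ast$ for Theorem~\ref{t-order} (the bound $U_\lambda\le U_\infty$ holding in an exterior region), and, more importantly, that the separation rate $|x|^{-\nu_*}$ of \eqref{beta-plus} dominates the correction rate $o(|x|^{-\nu_*})$ of \eqref{beta-minus}, so that the $o(\cdot)$ perturbations cannot cancel the strictly positive gap between $U_{\lambda_1}$ and $U_{\lambda_2}$. This matching of orders is exactly what the two refined limits supply, and it is what makes the family genuinely a continuum rather than collapsing onto a single solution.
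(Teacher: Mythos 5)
Your proposal is correct and follows essentially the same route as the paper: truncate the family $(U_\lambda)$ at a threshold where $\psi< U_\lambda$ on $\partial K$, feed each $U_\lambda$ into Theorem~\ref{t-order} as $U_\ast$, and separate the resulting solutions $U_\lambda^\psi$ by playing the rate \eqref{beta-plus} against the $o(|x|^{-\nu_*})$ correction \eqref{beta-minus}. Your write-up merely makes explicit (via Dini's theorem and the three-term decomposition) what the paper's proof asserts in one line, so it is a faithful, slightly more detailed rendering of the same argument.
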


\proof
Consider the family of slow decay solution $(U_\lambda)_{\lambda>0}$, constructed in Theorem \ref{t-U-lambda}.
In view of \eqref{psi-less} there exists $\lambda_\psi>0$ such that for all $\lambda>\lambda_\psi$
$$0\le \psi(x)<{U_\lambda}(x)<U_\infty(x)\quad\text{on $\partial K$}.$$
Let $\lambda_1\in(\lambda_\psi,\infty]$.
In Theorem \ref{t-order}, choose $U_*:=U_{\lambda_1}$  and note that in view of \eqref{beta-minus} and \eqref{beta-plus}
the solution $U_{\lambda_1}^\psi$ given by Theorem \ref{t-order} is distinct with $U_\lambda$ for any $\lambda>\lambda_\psi$,
or with $U_{\lambda_2}^\psi$ for any other $\lambda_2>\lambda_\psi$. In such a way we have obtained
a family of distinct slow decay solutions $(U_\lambda^\psi)_{\lambda\in(\lambda_\psi,\infty]}$.
\qed

\begin{remark}
In particular, if  $p>p_S$ and $pC_{p,\nu}^{p-1}\le \nu^2$ then the problem
\beq\label{psi-0}
-\Delta u +\frac{\nu^2-\nu_\ast^2}{|x|^2}u = u^p \quad\text{in $\R^N\setminus K$},\qquad u=0\quad\text{on $\partial K$},
\eeq
admits a continuum of distinct positive slow decay solutions $(U^0_{\lambda})_{\lambda\in(0,\infty]}$.
This partially extends the result in \cite[Theorem 1]{DelPino-CV}), obtained in the pure Laplacian case
$\nu=\nu_*$. Note however that in \cite{DelPino-CV} the existence of a continuum of slow decay solutions
was proved for the whole range of exponents $p>p_S$, including the most challenging unstable regime $pC_{p,\nu_*}^{p-1}>\nu_*^2$.
The techniques in \cite{DelPino-CV} (see also a survey \cite{DelPino}) are based on linearization and perturbation arguments
combined with a sophisticated machinery of harmonic expansions.
Such considerations go beyond the scope of the present work.
\end{remark}

\begin{remark}
In the pure Laplacian case $\nu=\nu_*$ it is known that if $K$ is starshaped with respect to infinity
then \eqref{psi-0} has no positive solutions in the subcritical range $1<p\le p_S$, see \cite[Theorem 2]{Zou}.
This suggests that the nonuniqueness statement of Corollary \ref{cor-0}
can not be extended beyond the supercritical range of exponents.
\end{remark}

\section*{Acknowledgements}

The authors are grateful to Marie-Fran\c{c}oise Bidaut-V\'eron for stimulating discussions.

\end{document}